\newtheorem{thm}{Theorem}[section]
\newtheorem{cor}[thm]{Corollary}
\newtheorem{lem}[thm]{Lemma}
\newtheorem{prop}[thm]{Proposition}
\newtheorem{exam}[thm]{Example}
\numberwithin{equation}{section}
\begin{document}

\title{On Zhou nil-clean rings}

\author{Marjan Sheibani Abdolyousefi}
\author{Nahid Ashrafi}
\author{Huanyin Chen}
\address{
Farzanegan Girls University of Semnan, Semnan, Iran}
\email{<sheibani@fgusem.ac.ir>}
\address{
Faculty of Mathematics\\Statistics and Computer Science\\  Semnan University\\ Semnan, Iran}
\email{<nashrafi@semnan.ac.ir>}
\address{
Department of Mathematics\\ Hangzhou Normal University\\ Hang -zhou, China}
\email{<huanyinchen@aliyun.com>}

\subjclass[2010]{16U99, 16E50, 13B99.} \keywords{Nilpotent; tripotent; 2-idempotent; Kosan ring; Zhou nil-clean ring.}

\begin{abstract}
A ring $R$ is a Zhou nil-clean ring if every element in $R$ is the sum of two tripotents and a nilpotent that commute. In this paper, Zhou nil-clean rings are further discussed with an emphasis on their relations with polynomials, idempotents and 2-idempotents. We prove that a ring $R$ is a Zhou nil-clean ring if and only if for any $a\in R$, there exists $e\in {\Bbb Z}[a]$ such that $a-e\in R$ is nilpotent and $e^5=5e^3-4e$, if and only if for any $a\in R$, there exist idempotents $e,f,g,h\in {\Bbb Z}[a]$ and a nilpotent $w$ such that $a=e+f+g+h+w$, if and only if for any $a\in R$, there exist 2-idempotents $e,f\in {\Bbb Z}[a]$ and a nilpotent $w\in R$ such that $a=e+f+w$, if and only if for any $a\in R$, there exists a 2-idempotent $e\in {\Bbb Z}[a]$ and a nilpotent $w\in R$ such that $a^2=e+w$, if and only if $R$ is a Kosan exchange ring. As corollaries, new characterizations of strongly 2-nil-clean rings are thereby obtained.
\end{abstract}

\maketitle

\section{Introduction}
Throughout, all rings are associative with an identity. An element $a$ in a ring $R$ is (strongly) clean provided that it is the sum of an idempotent and a unit (that commute). A ring $R$ is (strongly) clean in case every element in $R$ is (strongly) clean. A ring $R$ is an exchange ring provided that for any $a\in R$, there
exists an idempotent $e\in R$ such that $e\in aR$ and $1-e\in (1-a)R$. Every (strongly) clean ring is an exchange ring, but the converse is not true (see~\cite[Proposition 1.8]{N}). As is well known, a ring $R$ is an exchange ring if and only if every idempotent lifts modulo every right ideal (see~\cite{CH}). An element $e$ in a ring is tripotent if $e^3=e$.
A ring $R$ is a Zhou nil-clean if every element in $R$ is the sum of two tripotents and a nilpotent that commute. Several elementary properties of such rings are established in ~\cite{Z}.

In this paper, we further discuss Zhou nil-clean rings with an emphasis on their relations with polynomials, idempotents and 2-idempotents. In Section 2 we establish new structure of Zhou nil-clean rings. We prove that a ring $R$ is a Zhou nil-clean ring if and only if for any $a\in R$, there exists $e\in {\Bbb Z}[a]$ such that $a-e\in R$ is nilpotent and $e^5=5e^3-4e$, if and only if for any $a\in R$, there exist idempotents $e,f,g,h\in {\Bbb Z}[a]$ and a nilpotent $w$ such that $a=e+f+g+h+w$.

An element $e\in R$ is a 2-idempotent if $e^2$ is an idempotent, i.e., $e^2=e^4$. Every tripotent in a ring is a 2-idempotent, but the converse is not true. For instance, every element in ${\Bbb Z}_4$ is a 2-idempotent, while $2\in {\Bbb Z}_4$ is not a tripotent. In Section 3, we characterize Zhou nil-clean rings by means of 2-idempotents. We prove that a ring $R$ is Zhou nil-clean if and only if for any $a\in R$, there exist 2-idempotents $e,f\in {\Bbb Z}[a]$ and a nilpotent $w\in R$ such that $a=e+f+w$, if and only if for any $a\in R$, there exists a 2-idempotent $e\in {\Bbb Z}[a]$ and a nilpotent $w\in R$ such that $a^2=e+w$. Let $R$ be a ring in which every element in $R$ is the sum of two commuting 2-idempotents. As an application, we prove that
every $n\times n$ matrix over $R$ is the sum of two tripotent matrices and a nilpotent.

An element $u$ in a ring is unipotent if $u-1$ is nilpotent. A ring $R$ is called a Kosan ring if for any $u\in U(R)$, $u^4$ is unipotent.
Finally, in Section 4, we prove that a ring $R$ is a Zhou nil-clean ring if and only if $R$ is an exchange Kosan ring.

We use $N(R)$ to denote the set of all nilpotents in $R$ and $J(R)$ the Jacobson radical of $R$. ${\Bbb N}$ stands for the set of all natural numbers.
${\Bbb Z}[u]=\{ f(u)~|~f(t)$ is a polynomial with integral coefficients $\}$.

\section{Structure Theorems}

The purpose of this section is to explore structure of of Zhou nil-clean rings. We begin with

\begin{lem} Let $R$ be a Zhou nil-clean ring with $5\in N(R)$. Then for any $a\in R$ there exists $e\in {\Bbb Z}[a]$ such that $a-e\in R$ is nilpotent and $e^5=5e^3-4e$.
\end{lem}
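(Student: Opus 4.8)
The plan is to pass from the abstract Zhou nil-clean decomposition of $a$ to a canonical root, lying in $\mathbb{Z}[a]$, of the polynomial
\[ g(t)=t^{5}-5t^{3}+4t=t(t-1)(t+1)(t-2)(t+2), \]
whose integer roots are exactly $0,\pm 1,\pm 2$ and for which the equation $g(e)=0$ is precisely the desired identity $e^{5}=5e^{3}-4e$. First I would extract two consequences of $5\in N(R)$: since $5$ is then nilpotent in $\mathbb{Z}[a]$ as well, the element $6=1+5$ is a unit of $\mathbb{Z}[a]$, hence so are $2,3$ and $4$. Second, using that $R$ is Zhou nil-clean, write $a=t_{1}+t_{2}+w$ with $t_{1},t_{2}$ commuting tripotents and $w$ a commuting nilpotent, and check that $g(a)$ is nilpotent: in the commutative ring $S=\mathbb{Z}[t_{1},t_{2},w]$ one passes to $S/\operatorname{nil}(S)$, which embeds into a product of integral domains in each of which a tripotent must equal $0$, $1$ or $-1$; thus the image of $a=t_{1}+t_{2}$ is coordinatewise in $\{0,\pm 1,\pm 2\}$ and is annihilated by $g$, so $g(a)\in\operatorname{nil}(S)$ is nilpotent.

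With $g(a)$ nilpotent and $1,2,3,4$ units of $\mathbb{Z}[a]$, I would carry out a Chinese Remainder Theorem / Hensel-type lifting inside the commutative ring $A:=\mathbb{Z}[a]$. For distinct $k,l\in\{-2,-1,0,1,2\}$ the element $(a-k)-(a-l)=l-k$ is a unit, so the principal ideals $(a-k)A$ are pairwise comaximal, and hence so are their $N$-th powers. Choosing $N$ with $g(a)^{N}=0$ and using $g(a)^{N}=\prod_{k=-2}^{2}(a-k)^{N}$, the product of the ideals $(a-k)^{N}A$ is zero, so
\[ A\;\cong\;\prod_{k=-2}^{2}A\big/(a-k)^{N}A . \]
Let $\varepsilon_{-2},\dots,\varepsilon_{2}\in A$ be the idempotents afforded by this decomposition and set $e=\sum_{k=-2}^{2}k\,\varepsilon_{k}\in A\subseteq R$. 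In the $k$-th factor the image of $a$ differs from $k$ by a nilpotent element, while $e$ maps onto $k$; hence there $g(e)=g(k)=0$ and $a-e$ maps to a nilpotent. As there are only finitely many factors, it follows that $g(e)=0$ in $A$, i.e.\ $e^{5}=5e^{3}-4e$, and that $a-e$ is nilpotent, which is the assertion.

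I expect the crux to be exactly the step that puts $e$ into $\mathbb{Z}[a]$: the Zhou decomposition only yields the sum $t_{1}+t_{2}$ of two commuting tripotents, which a priori is not a polynomial in $a$. The way around this is the observation that, once $g(a)$ is known to be nilpotent, the commutative ring $\mathbb{Z}[a]$ automatically splits as a product indexed by the integer roots of $g$, because those roots are separated by the units $1,2,3,4$ — and it is precisely here that $5\in N(R)$ is used — after which the diagonal element $\sum_{k}k\,\varepsilon_{k}$ supplies the required substitute for the tripotent sum. A minor point to verify along the way is that the idempotents produced by the Chinese Remainder Theorem genuinely lie in $\mathbb{Z}[a]$, and not merely in $R$; this holds because every ideal used is generated by an element $a-k$ with $k\in\mathbb{Z}$.
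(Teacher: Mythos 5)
Your proof is correct, but it takes a genuinely different route from the paper's. The paper argues by explicit construction: starting from $a-a^5\in N(R)$ it forms $x=3a+a^2+a^4$ and $y=3a-a^2-a^4$, splits each (using that $2$ is invertible) as a difference of idempotents lifted inside ${\Bbb Z}[a]$, verifies the orthogonality relations $ef=gh=eh=fg=0$ by computing various products modulo $N(R)$, and then checks by direct expansion that $\alpha=e-f+g-h$ satisfies $\alpha^5=5\alpha^3-4\alpha$. You instead factor $g(t)=t^5-5t^3+4t=\prod_{k=-2}^{2}(t-k)$, note that $g(a)$ is nilpotent and that $5\in N(R)$ makes $1,2,3,4$ invertible with inverses in ${\Bbb Z}\cdot 1$, so the ideals $(a-k){\Bbb Z}[a]$ are pairwise comaximal and the Chinese Remainder Theorem splits ${\Bbb Z}[a]$ into five factors in each of which $a$ is congruent to the integer $k$ modulo nilpotents; the diagonal element $e=\sum_k k\,\varepsilon_k$ is then an exact root of $g$ with $a-e$ nilpotent. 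Each step checks out, including the point you flag: the idempotents $\varepsilon_k$ do lie in ${\Bbb Z}[a]$ because the whole CRT decomposition takes place inside that commutative subring. Your approach is the more conceptual one --- it is the standard Hensel-type lifting of the roots of a polynomial that splits over ${\Bbb Z}$ with unit root-differences modulo a nil ideal, and it would apply verbatim to any such polynomial --- whereas the paper's computation additionally produces the explicit orthogonal idempotents $e,f,g,h\in{\Bbb Z}[a]$ that are reused later (in the proof of Theorem 2.5) to write elements as sums of four idempotents and a nilpotent; your $\varepsilon_k$ could serve the same purpose, but the later argument as written refers to the paper's specific construction. One simplification available to you: $g(a)\in N(R)$ follows in one line from $g(a)=(a^5-a)-5(a^3-a)$, since $a^5-a\in N(R)$ by \cite[Theorem 2.11]{Z}, $5\in N(R)$, and all terms lie in the commutative ring ${\Bbb Z}[a]$; your reduction of ${\Bbb Z}[t_1,t_2,w]$ modulo its nilradical and passage to domains is correct but not needed.
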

\begin{proof} $(1)\Rightarrow (2)$ Let $a\in R$. then $a-a^5\in N(R)$. Set $x=3a+a^2+a^4~\mbox{and}~y=3a-a^2-a^4$. Then
$x-x^3, y-y^3\in N(R)$.
Set $$b=\frac{x^2+x}{2},c=\frac{x^2-x}{2}; p=\frac{y^2+y}{2},q=\frac{y^2-y}{2}.$$ Then $b-b^2,c-c^2;p-p^2,q-q^2\in N(R)$.
Thus, we can find idempotents $e,f;g,h\in {\Bbb Z}[a]$ such that $$b-e,c-f;p-g,q-h\in N(R).$$ Since $x=b-c$ and $y=p-q$, we see that $x-(e-f); y-(g-h)\in N(R)$.
Therefore $$a=(x+y)-5a=(e-f)+(g-h)+w~\mbox{for some}~w\in N(R).$$

As $bc=\frac{x^4-x^2}{4}\in N(R)$, we see that $ef\in N(R)$; and so $ef=0$. As $pq\in N(R)$, likewise, $gh=0$.

Since $5\in R$ is nilpotent, we directly verify that
$$\begin{array}{c}
x^2+x\equiv -a+2a^2+a^3-2a^4, y^2+y\equiv 2a-a^3+a^4;\\
x^2-x\equiv -2a+a^3+a^4, y^2-y\equiv a+2a^2-a^3-2a^4 (\mbox{mod} N(R)).
\end{array}$$
Moreover, we have
$$\begin{array}{c}
bp\equiv 2a+a^2-2a^3-a^4, bq\equiv 0;\\
cp\equiv 0, cq\equiv -2a+a^2+2a^3-a^4 (\mbox{mod} N(R)).
\end{array}$$
Thus, we see that $eh, fg\in N(R)$; hence, $eh=fg=0$. Accordingly, we see that $ef=gh=eh=fg=0$, and then we check that
$$\begin{array}{c}
(e-f+g-h)^5=(e+f+g+h+2eg+2fh)^2(e-f+g-h)\\
=(e+f+g+h+14eg+14fh)(e-f+g-h)\\
=e-f+g-h+30(eg-fh).
\end{array}$$ Moreover, we have
$$\begin{array}{c}
(e-f+g-h)^3=(e+f+g+h+2eg+2fh)(e-f+g-h)\\
=e-f+g-h+6(eg-fh).
\end{array}$$ Let $\alpha=e-f+g-h$. Then $\alpha^5=\alpha+5(\alpha^3-\alpha)$. Hence, $\alpha^5=5\alpha^3-4\alpha$, as required.\end{proof}

We now ready to present a new characterization of a Zhou nil-clean ring.

\begin{thm} Let $R$ be a ring. Then the following are equivalent:
\end{thm}
\begin{enumerate}
\item [(1)] {\it $R$ is Zhou nil-clean.}
\vspace{-.5mm}
\item [(2)] {\it $a^5-5a^3+4a\in R$ is nilpotent for all $a\in R$.}
\vspace{-.5mm}
\item [(3)] {\it For any $a\in R$, there exists $e\in {\Bbb Z}[a]$ such that $a-e\in R$ is nilpotent and $e^5=5e^3-4e$.}
\end{enumerate}
\begin{proof} $(1)\Rightarrow (3)$ In view of~\cite[Theorem 2.11]{Z}, $R$ is isomorphic to $R_1,R_2,R_3$ or the product of these rings, where $R_1$ is strongly nil-clean and $2\in N(R_1)$; $R_2$ is strongly 2-nil-clean and $3\in N(R_2)$; $R_3$ is Zhou nil-clean and $5\in N(R_3)$.

Step 1. Let $a\in R_1$. In view of~\cite[Lemma 2.4]{Z}, there exists an idempotent $e\in {\Bbb Z}[a]$ such that $a-e\in N(R_1)$. We easily check that $e^5=5e^3-4e$.

Step 2. Let $a\in R_2$. By virtue of~\cite[Lemma 2.6]{Z}, there exists a tripotent $e\in {\Bbb Z}[a]$ such that $a-e\in N(R_1)$.
We easily check that $e^5=5e^3-4e$.

Step 3. Let $a\in R_1$. According to Lemma 2.1, there exists an idempotent $e\in {\Bbb Z}[a]$ such that $a-e\in N(R_1)$. We easily check that $e^5=5e^3-4e$.

Let $a\in R$. Combining the preceding steps, we can find $e\in {\Bbb Z}[a]$ such that $a-e\in R$ is nilpotent and $e^5=5e^3-4e$.

$(3)\Rightarrow (2)$ Let $a\in R$. Then there exists $e\in {\Bbb Z}[a]$ such that $w:=a-e\in R$ is nilpotent and $e^5=5e^3-4e$. Hence,
$(a-w)^5=5(a-w)^3-4(a-w)$. Thus, $a^5-5a^3+4a\in N(R)$, as required.

$(2)\Rightarrow (1)$ By hypothesis, $2^3\times 3\times 5=3^5-5\times 3^3+4\times 3\in N(R)$; hence, $2\times 3\times 5\in N(R)$. Write $2^n\times 3^n\times 5^n=0 (n\in {\Bbb N})$. Thus, $R\cong R_1\times R_2\times R_3$, where $R_1=R/2^nR, R_2=R/3^nR$ and $R_3=R/3^nR$.

Step 1. Let $x\in R_1$. As $2\in N(R_1)$, we see that $x^4(x-1)=x^5-x^4\in N(R_1)$, and so $x-x^2\in N(R_1)$. This shows that
$R_1$ is strongly nil-clean. Hence, $R_1$ is Zhou nil-clean, by~\cite[Proposition 2.5]{Z}.

Step 2. Let $x\in R_2$. As $3\in N(R_2)$, we see that $x(x^2-1)^2=x^5-2x^3+x\in N(R_2)$. This shows that $x-x^3\in N(R_2)$. Hence, $R_2$ is strongly 2-nil-clean. In view of~\cite[Proposition 2.5]{Z}, $R_2$ is Zhou nil-clean.

Step 3. Let $x\in R_3$. As $5\in N(R_3)$, we have $x-x^5\in N(R_3)$. In light of~\cite[Proposition 2.10]{Z}, $R_3$ is Zhou nil-clean.

Therefore $R$ is Zhou nil-clean.\end{proof}

\begin{cor} A ring $R$ has the identity $x=x^5$ if and only if
\end{cor}
\begin{enumerate}
\item [(1)] {\it $R$ is reduced;}
\vspace{-.5mm}
\item [(2)] {\it $R$ has the identity $x^5=5x^3-4x$.}
\end{enumerate}
\begin{proof} $\Longrightarrow$ Clearly, $R$ is reduced. In view of~\cite[Theorem 2.11]{Z}, $R$ is Zhou nil-clean. Let $x\in R$.
By virtue of Theorem 2.2, $x^5-5x^3+4x\in N(R)=0$, as required.

$\Longleftarrow$ In view of Theorem 2.2, $R$ is Zhou nil-clean. In light of~\cite[Proposition 2.8]{Z}, $J(R)$ is nil, and so $J(R)=0$. This completes the proof by ~\cite[Theorem 2.11]{Z}.\end{proof}

\begin{exam} Let $R={\Bbb Z}_{25}$. Then $R$ is Zhou nil-clean. But $2\in R$ can not be written as the sum of $f^5=f$ and a nilpotent.\end{exam}
\begin{proof} Clearly, $J(R)=5R$ is nil and $R/J(R)\cong {\Bbb Z}_5$. In light of~\cite[Theorem 2.11]{Z}, $R$ is Zhou nil-clean.
We check that $N(R)=\{0,5,10,15,20\}$, and then $(2-x)^5\neq 2-x$ for all $x\in N(R)$, and we are through.\end{proof}

We come now to characterize Zhou nil-clean rings by means of their idempotents. We

\begin{thm} Let $R$ be a ring. Then the following are equivalent:
\end{thm}
\begin{enumerate}
\item [(1)] {\it $R$ is Zhou nil-clean.}
\vspace{-.5mm}
\item [(2)] {\it For any $a\in R$, there exist idempotents $e,f,g,h\in {\Bbb Z}[a]$ and a nilpotent $w$ such that $a=e+f+g+h+w$.}
\vspace{-.5mm}
\item [(3)] {\it For any $a\in R$, there exist idempotents $e,f,g,h$ and a nilpotent $w$ that commute such that $a=e+f+g+h+w$.}
\end{enumerate}
\begin{proof} $\Longrightarrow$ Since $R$ is Zhou nil-clean, it follows by~\cite[Theorem 2.11]{Z} that $2^5-2\in N(R)$; hence, $30=2\times 3\times 5\in N(R)$.
Write $2^n\times 3^n\times 5^n=0$. Then $R\cong R_1\times R_2\times R_3$, where $R_1\cong R/2^nR, R_2\cong R/3^nR$ and $R_3=R/5^nR$.
Clearly, each $R_i$ is Zhou nil-clean. As $2\in N(R_1)$, it follows by~\cite[Proposition 2.5]{Z} that $R_1$ is strongly nil-clean, and so every element in $R_1$ is the sum of an idempotent and a nilpotent that commute. As $3\in N(R_2)$, it follows by~\cite[Proposition 2.8]{Z} that $R_2$ is strongly 2-nil-clean, and so every element in $R_2$ is the sum of two idempotents and a nilpotent that commute.

Let $c\in R_3$, and let $a=2-c$. Clearly, $R_3$ is a Zhou nil-clean ring with $5\in N(R_3)$. Construct
$e,f,g,h$ as in the proof of Lemma 2.1, we see that
$a=(e-f)+(g-h)+w$, where $e,f,g,h\in R$ are idempotents and $w\in R$ is a nilpotent that commute. Hence, $c=(1-e)+f+(1-g)+h-w$.
Thus, $c$ is the sum of four idempotents and a nilpotent that commute in $R_3$. Therefore every element in $R$ can be written in this form.

$\Longleftarrow$ Let $a\in R$. Then there exist idempotents $e,f,g,h$ and a nilpotent $w$ that commute such that $2-a=e+f+g+h+w$. Hence,
$a=(1-e)-f+(1-h)-g-w$. Set $s=(1-e)-f$ and $t=(1-h)-g$. We easily check that $s^3=s, t^3=t$ and $st=ts$. Thus, $a$ is the sum of two tripotents and a nilpotent that commute. Therefore $R$ is Zhou nil-clean.\end{proof}

\begin{cor} A ring $R$ has the identity $x=x^5$ if and only if
\end{cor}
\begin{enumerate}
\item [(1)] {\it $R$ is reduced;}
\vspace{-.5mm}
\item [(2)] {\it Every element is the sum of four commuting idempotents.}
\end{enumerate}
\begin{proof} $\Longrightarrow$ This is obvious by Corollary 2.3 and Theorem 2.5.

$\Longleftarrow$ In view of Theorem 2.5, $R$ is Zhou nil-clean. Thus, $R/J(R)$ has the identity $x^5=x$ and $J(R)$ is nil by~\cite[Theorem 2.11]{Z}.
Thus, $J(R)=0$, and then the result follows.\end{proof}

\vskip4mm We note that "four idempotents" in the proceeding corollary can not be replaced by "three idempotents". For instance, ${\Bbb Z}_5$.
We do have

\begin{thm} Let $R$ be a ring. Then the following are equivalent:
\end{thm}
\begin{enumerate}
\item [(1)] {\it $R$ is strongly 2-nil-clean.}\vspace{-.5mm}
\item [(2)] {\it For any $a\in R$, there exist idempotents $e,f$ and a nilpotent $w$ that commute such that $a=e+f+w$.}
\vspace{-.5mm}
\item [(3)] {\it For any $a\in R$, there exist idempotents $e,f,g$ and a nilpotent $w$ that commute such that $a=e+f+g+w$.}
\end{enumerate}
\begin{proof} $(1)\Rightarrow (2)$ This is proved in~\cite[Lemma 2.2]{CS}.

$(2)\Rightarrow (3)$ This is trivial.

$(3)\Rightarrow (1)$ Let $a\in R$. Then there exist idempotents $e,f,g$ and a nilpotent $w$ that commute such that $2-a=e+f+g+w$. Hence, $a=(1-e)-f+(1-g)-w$. Clearly, $((1-e)-f)^3=(1-e)-f$. Thus, $a$ is the sum of a tripotent, an idempotent and a nilpotent that commute. In light of~\cite[Theorem 2.3]{CS}, $R$ is strongly 2-nil-clean.\end{proof}

\section{Decomposition of 2-Idempotents and nilpotents}

The aim of this section is to characterize Zhou nil-clean rings by their 2-idempotents and nilpotents. The next lemma will enable us to take full advantage of decompositions of rings.

\begin{lem} Let $R$ be a ring in which every element is the sum of two 2-idempotents and a nilpotent that commute. Then $30\in R$ is nilpotent.\end{lem}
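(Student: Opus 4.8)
The plan is to prove the stronger statement that $a^{5}-5a^{3}+4a\in N(R)$ for every $a\in R$, and then to deduce $30\in N(R)$ exactly as in the proof of Theorem 2.2. So fix $a\in R$ and write $a=e+f+w$, where $e,f$ are commuting $2$-idempotents and $w$ is a nilpotent commuting with $e$ and $f$. Work inside the commutative subring $T=\mathbb{Z}[e,f,w]\subseteq R$: it contains $a$, its nilradical $\mathfrak{n}$ is an ideal of $T$ contained in $N(R)$, and $w\in\mathfrak{n}$.

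The first key point is that a $2$-idempotent becomes a tripotent modulo nilpotents. Indeed, from $e^{4}=e^{2}$ (hence $e^{6}=e^{2}$) one computes $(e^{3}-e)^{2}=e^{6}-2e^{4}+e^{2}=e^{2}-2e^{2}+e^{2}=0$, so $e^{3}-e\in\mathfrak{n}$, and similarly $f^{3}-f\in\mathfrak{n}$. Therefore, in the reduced commutative ring $\overline{T}=T/\mathfrak{n}$ the images $\bar e,\bar f$ are commuting tripotents, and since $w\in\mathfrak{n}$ we have $\bar a=\bar e+\bar f$.

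The second step is a universal polynomial identity that absorbs all cross terms at once. In the ring $A=\mathbb{Z}[x,y]/(x^{3}-x,\,y^{3}-y)$, which is a free $\mathbb{Z}$-module with basis $\{x^{i}y^{j}:0\le i,j\le 2\}$ and which embeds into $\mathbb{Z}^{9}$ via evaluation of $(x,y)$ at all pairs in $\{-1,0,1\}^{2}$ (a Vandermonde determinant computation shows the evaluation map is injective), the element $p(x+y)$ is zero, where $p(t)=t^{5}-5t^{3}+4t=t(t-1)(t+1)(t-2)(t+2)$; this is immediate because $p$ vanishes on $\{-2,-1,0,1,2\}$. Applying the ring homomorphism $A\to\overline{T}$, $x\mapsto\bar e$, $y\mapsto\bar f$, we get $p(\bar e+\bar f)=0$, that is $p(\bar a)=0$, that is $a^{5}-5a^{3}+4a\in\mathfrak{n}\subseteq N(R)$. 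Since $a$ was arbitrary, $a^{5}-5a^{3}+4a\in N(R)$ for all $a\in R$.

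Finally, taking $a=3\cdot 1$ gives $3^{5}-5\cdot 3^{3}+4\cdot 3=120=2^{3}\cdot 3\cdot 5\in N(R)$, and, exactly as in the proof of Theorem 2.2, the nilpotence of $120\cdot 1$ forces $\mathrm{char}(R)$ to divide a power of $2\cdot 3\cdot 5$, whence $30\in N(R)$. I expect the only genuine work to be the cross-term bookkeeping for $(e+f)^{5}$; that is precisely why the argument is routed through the commutative quotient $\overline{T}$ together with the identity $p(x+y)\equiv 0$ in $A$, rather than expanding directly. The reduction $(e^{3}-e)^{2}=0$ and the closing number-theoretic step are routine.
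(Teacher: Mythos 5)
Your proof is correct, but it takes a genuinely different (and stronger) route than the paper's. The common starting point is the observation that a $2$-idempotent is a tripotent modulo nilpotents, via $(e^{3}-e)^{2}=e^{6}-2e^{4}+e^{2}=0$; the paper uses exactly this reduction (writing $e=g^{3}$, $f=h^{3}$). From there the paper specializes immediately to $a=3$: from $(3-e)^{3}\equiv 3-e \pmod{N(R)}$ it extracts $24-27e+9e^{2}\in N(R)$, multiplies by $3+e$ to reach $2^{3}\cdot 3\cdot(3-2e)\in N(R)$, and after cubing and eliminating $e$ arrives at $2^{9}\cdot 3^{4}\cdot 5\in N(R)$ --- an elementary but ad hoc computation aimed solely at the element $30$. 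You instead prove the uniform statement $a^{5}-5a^{3}+4a\in N(R)$ for every $a\in R$, by passing to the reduced commutative quotient $\overline{T}$ and invoking the universal identity $p(x+y)=0$ in $\mathbb{Z}[x,y]/(x^{3}-x,\,y^{3}-y)$ (correctly justified by the injective evaluation into $\mathbb{Z}^{9}$), then evaluating at $a=3$ to get $120=2^{3}\cdot 3\cdot 5\in N(R)$ and hence $30\in N(R)$. Your approach costs a little more machinery but buys considerably more: the conclusion $a^{5}-5a^{3}+4a\in N(R)$ for all $a$ is precisely condition (2) of Theorem 2.2, so your argument proves outright that $R$ is Zhou nil-clean, i.e.\ it establishes the implication $(3)\Rightarrow(1)$ of Theorem 3.2 in one stroke and renders the subsequent case analysis ($R\cong R_{1}\times S$, Steps 1 and 2) unnecessary. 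The closing numerical step is fine: $120^{n}=0$ gives $30^{3n}=0$.
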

\begin{proof} Write $3=g+h+w$, where $g^2=g^4,h^2=h^4,w\in N(R)$ and $e,f,w$ commute with  another.
Let $e=g^3$ and $f=h^3$. Then $3=e+f+b$, where $e^3=e,f^3=f,b=w+(e-e^3)+(f-f^3)$. As $(e-e^3)^2=(f-f^3)^2=0$, we see that
$b\in N(R)$. Hence, $3-e=f+b$, and so $(3-e)^3=f^3+3bf(b+f)$. This implies that
$(3-e)^3-(3-e)\in N(R)$, i.e., $24-9e(3-e)=24-27e+9e^2\in N(R)$. We infers that
$$2^3\times 3\times (3-2e)=(3+e)(24-27e+9e^2)\in N(R).$$ Hence, we can find $w\in N(R)$ such that
$2^3\times 3^2=2^4\times 3e+w$, and so $2^9\times 3^6=2^{12}\times 3^3e^3+w'$ for $w'\in N(R)$. It follows that
$$2^9\times 3^4(2^2-3^2)\in N(R),$$ i.e., $2^9\times 3^4\times 5\in N(R)$. Therefore $30\in N(R)$, as asserted.\end{proof}

\begin{thm} Let $R$ be a ring. Then the following are equivalent:
\end{thm}
\begin{enumerate}
\item [(1)] {\it $R$ is Zhou nil-clean.}
\vspace{-.5mm}
\item [(2)] {\it For any $a\in R$, there exist 2-idempotents $e,f\in {\Bbb Z}[a]$ and a nilpotent $w\in R$ such that $a=e+f+w$.}
\vspace{-.5mm}
\item [(3)] {\it Every element in $R$ is the sum of two 2-idempotents and a nilpotent that commute.}
\end{enumerate}
\begin{proof} $(1)\Rightarrow (2)$ This is obvious by~\cite[Theorem 2.11]{CS}, as every tripotent in $R$ is a 2-idempotent.

$(2)\Rightarrow (3)$ This is trivial.

$(3)\Rightarrow (1)$ In view of Lemma 3.1, $30\in N(R)$. Write $2^n\times 3^n\times 5^n=0 (n\in {\Bbb N})$. Then $R\cong R_1\times R_2\times R_3$, where $R_1=R/2^nR, R_2=R/3^nR$ and $R_3=R/5^nR$. Set $S=R_2\times R_3$. Then $R\cong R_1\times S$.
 Step 1. Let $a^2=a^4$ in $R_1$. Then $a^2(1-a)(1+a)=0$. As $2\in N(R_1)$, we see that
$(a-a^2)^2\in N(R)$; hence, $a-a^2\in N(R_1)$. In light of~\cite[Proposition 2.5]{Z}, there exists an idempotent $e\in {\Bbb Z}[a]$ such that 
$a-e\in N(R_1)$. Thus, every element in $R_1$ is the sum of two idempotents and a nilpotent that commute. This shows that 
$R_1$ is strongly 2-nil-clean; hence, it is Zhou nil-clean.

Step 2. Let $a^2=a^4\in S$. Then $a(a-a^3)=0$, and so $(a-a^3)^2=a(a-a^3)(1-a^2)\in N(R)$. Hence, $a-a^3\in N(S)$. As $2\in U(S)$, it follows by~\cite[Lemma 2.6]{Z} that there exists $e^3=e\in S$ such that $a-e\in N(S)$.

Let $c\in S$. Then we can find 2-idempotents $b,c\in S$ such that $a-b-c\in N(S)$ and $a,b,c\in S$ commute.  By the preceding discussion, we have
tripotents $g\in {\Bbb Z}[b]$ and $h\in {\Bbb Z}[c]$ such that $b-g,c-h\in N(S)$. Therefore $a-g-h=(a-b-c)+(b-g)+(c-h)\in N(S)$ where $a,g,h$ commute.
Therefore every element in $S$ is the sum of two tripotents and a nilpotent in $S$. That is, $S$ is Zhou nil-clean.

Therefore $R$ is Zhou nil-clean, as asserted.\end{proof}

\begin{cor} Let $R$ be a ring. Then the following are equivalent:
\end{cor}
\begin{enumerate}
\item [(1)] {\it $R$ is strongly 2-nil-clean.}
\vspace{-.5mm}
\item [(2)] {\it Every element in $R$ is the sum of a 2-idempotent and a nilpotent that commute.}
\end{enumerate}
\begin{proof} $(1)\Rightarrow (2)$ This is obvious, by~\cite[Theorem 2.8]{CS}.

$(2)\Rightarrow (1)$ In view of Theorem 3.2, $R$ is Zhou nil-clean. Write $2=e+w$ where $e^3=e\in R, w\in N(R)$. Then
$2^3-2\in N(R)$, and so $6\in N(R)$. In light of~\cite[Lemma 3.5]{Y}, $R$ is strongly nil-clean.\end{proof}

\begin{thm} Let $R$ be a ring. Then the following are equivalent:
\end{thm}
\begin{enumerate}
\item [(1)] {\it $R$ is Zhou nil-clean.}\vspace{-.5mm}
\item [(2)] {\it For any $a\in R$, there exists a 2-idempotent $e\in {\Bbb Z}[a]$ and a nilpotent $w\in R$ such that $a^2=e+w$.}
\vspace{-.5mm}
\item [(3)] {\it For any $a\in R$, $a^2$ is the sum of a 2-idempotent and a nilpotent that commute.}
\end{enumerate}
\begin{proof} $(1)\Rightarrow (2)$ As in the proof of Theorem 3.2, $R\cong R_1\times S$ with $2\in N(R_1)$ and $2\in U(S)$.
Then $R$ is strongly nil-clean by~\cite[Proposition 2.5]{Z}. Let $a\in R$. Write $a=(a_1,a_2)$. Then $a_1^2$ is the sum of an idempotent and a nilpotent in $R_1$.
As $R$ is Zhou nil-clean, so is $R_2$. Hence, $a_2^5-a_2\in N(S)$. This implies that $(a_2^2)^3-a_2^2=a_2(a_2^5-a_2)\in N(S)$. As $2\in U(S)$,
it follows by~\cite[Lemma 2.6]{Z} that there exists $f^3=f\in {\Bbb Z}[a_2]$ such that $a_2^2-f\in N(S)$. Thus, we can find a tripotent $e\in {\Bbb Z}[a]$ such that $a^2-e\in N(R)$. Thus proving $(2)$ as every tripotent is a 2-idempotent.

$(2)\Rightarrow (3)$ This is obvious, as every tripotent is a 2-idempotent.

$(3)\Rightarrow (1)$ Let $a\in R$. Then there exists a 2-idempotent $e\in R$ such that $a-e\in N(R)$ and $ae=ea$. Hence,
$(a^2)^4-(a^2)^2\in N(R)$; hence, $a^8-a^4\in N(R)$. Thus, $(a^5-a)^4=a^3(a^5-a)(a^4-1)^3=(a^8-a^4)(a^4-1)\in N(R)$, i.e., $a^5-a\in N(R)$. In light of~\cite[Theorem 2.11]{Z}, $R$ is Zhou nil-clean.\end{proof}

\begin{cor} Let $R$ be a ring. Then the following are equivalent:
\end{cor}
\begin{enumerate}
\item [(1)] {\it $R$ is Zhou nil-clean.}\vspace{-.5mm}
\item [(2)] {\it For any $a\in R$, $a^4$ is the sum of an idempotent and a nilpotent that commute.}
\end{enumerate}
\begin{proof} $(1)\Rightarrow (2)$ Let $a\in R$. In view of~\cite[Theorem 2.11]{Z}, $a^5-a\in N(R)$; hence, $(a^4)^2-a^4=a^3(a^5-a)\in N(R)$. In light of~\cite[Lemma 3.5]{Y}, $a^4\in R$ is strongly nil-clean, as required.

$(2)\Rightarrow (1)$ By hypothesis, $a^4$ is strongly nil-clean, and so $(a^4)^2-a^4\in N(R)$. Hence, $a^3(a^5-a)\in N(R)$.
Thus, $(a^5-a)^4=(a^4-1)^3a^3(a^5-a)\in N(R)$.
We infer that $a^5-a\in N(R)$. Therefore $R$ is Zhou nil-clean, by~\cite[Theorem 2.11]{Z}.\end{proof}

Recall that a ring $R$ is of bounded index if there exists $n\in {\Bbb N}$ such that $x^n=0$ for all $x\in N(R)$.

\begin{lem} Let $R$ be a ring in which every element in $R$ is the sum of two commuting 2-idempotents. Then $R$ is of bounded index.\end{lem}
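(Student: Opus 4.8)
The plan is to combine Lemma 3.1 with a finiteness argument for a ``universal'' ring. First I would observe that the hypothesis here is a special case of that of Lemma 3.1: writing $a=e+f$ with $e,f$ commuting $2$-idempotents is the same as writing $a=e+f+0$ with $e,f,0$ commuting and $0$ nilpotent. Hence Lemma 3.1 applies and gives $30\in N(R)$, so there is a fixed $n\in{\Bbb N}$ with $30^n=0$ in $R$. The purpose of this step is that every commutative subring ${\Bbb Z}[e,f]\subseteq R$ generated by a pair of commuting $2$-idempotents then lies over the fixed finite coefficient ring ${\Bbb Z}/30^n{\Bbb Z}$.

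Next I would introduce $V:=({\Bbb Z}/30^n{\Bbb Z})[x,y]/(x^4-x^2,\,y^4-y^2)$ and note that $V$ is a finite ring: modulo the relations $x^4=x^2$ and $y^4=y^2$ it is spanned as a ${\Bbb Z}/30^n{\Bbb Z}$-module by the sixteen monomials $x^iy^j$ with $0\le i,j\le 3$, and ${\Bbb Z}/30^n{\Bbb Z}$ is finite. In particular $V$ has only finitely many ideals, hence only finitely many quotient rings, and each of these is a finite commutative ring and so is of bounded index. I would therefore fix once and for all an integer $m$ (for instance $m=|V|$) such that $s^m=0$ for every nilpotent $s$ in every quotient ring of $V$.

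Then, given an arbitrary $a\in N(R)$, I would write $a=e+f$ with $e,f$ commuting $2$-idempotents, so that $a$ lies in the commutative subring ${\Bbb Z}[e,f]$ of $R$. The relations $e^4=e^2$, $f^4=f^2$, $ef=fe$ and $30^n=0$ show that sending $x\mapsto e$, $y\mapsto f$ gives a surjective ring homomorphism from $V$ onto ${\Bbb Z}[e,f]$; thus ${\Bbb Z}[e,f]$ is one of the finitely many quotient rings of $V$. Since $a$ is nilpotent in the commutative ring ${\Bbb Z}[e,f]$, it lies in its nilradical, so $a^m=0$ by the choice of $m$. As $a$ was an arbitrary nilpotent, $R$ has bounded index at most $m$.

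I expect the only genuine obstacle to be conceptual: one must not argue ``${\Bbb Z}[e,f]$ is a quotient of a ring of bounded index, hence of bounded index'', because bounded index does \emph{not} pass to quotients in general (compare ${\Bbb Z}$ with the rings ${\Bbb Z}/p^k{\Bbb Z}$). The finiteness of $V$ is exactly what repairs this gap, since finitely many quotients have a maximum (finite) nilpotency index. Everything else — finiteness of $V$, well-definedness and surjectivity of the evaluation map, and the elementary fact that a nilpotent of a commutative ring lies in its nilradical — is routine.
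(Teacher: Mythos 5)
Your proof is correct, and it takes a genuinely different route from the paper's. The paper first notes $N(R)\subseteq J(R)$ (since $R$ is Zhou nil-clean and $R/J(R)$ satisfies $x^5=x$, hence is reduced), derives $2^5\times 3^3\times 5=0$ by an explicit computation with $3=e+f$, decomposes $R\cong R_1\times R_2\times R_3$ along the primes $2,3,5$, and then in each factor expands $b=e+f$ binomially to produce concrete nilpotency bounds ($b^{10}=0$, $b^9=0$, $b^3=0$ respectively). You instead invoke Lemma 3.1 (correctly, with $w=0$) to get $30^n=0$, and then run a ``universal finite ring'' argument: every subring ${\Bbb Z}[e,f]$ is a quotient of the finite ring $V=({\Bbb Z}/30^n{\Bbb Z})[x,y]/(x^4-x^2,\,y^4-y^2)$, and any nilpotent $s$ in a ring of cardinality at most $|V|$ satisfies $s^{|V|}=0$, so $m=|V|$ is a uniform bound. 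Your remark that bounded index does not pass to quotients, and that finiteness of $V$ is what closes this gap, is exactly the right point to flag. The trade-off is clear: the paper's computation yields small explicit bounds (index at most $10$) and is self-contained within the prime-power factors, whereas your argument gives an enormous bound $(30^n)^{16}$ but eliminates all binomial bookkeeping and generalizes immediately to sums of boundedly many commuting elements satisfying any fixed monic integral relation, once some integer is known to be nilpotent. Both are complete proofs.
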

\begin{proof} In view of Theorem 3.4, $R$ is Zhou nil-clean. By using~\cite[Theoerm 2.11]{Z}, $R/J(R)$ has the identity $x^5=x$.
By using Jacobson Theorem, $R/J(R)$ is commutative. Hence, $N(R)\subseteq J(R)$.

Write $3=e+f$ where $e,f$ are commuting 2-idempotents. Set $r=e^3-e$ and $s=f^3-f$. Then $re=sf=0$.
We check that $$8(e+f)=24=3^3-3=(e+f)^3-(e+f)=3e^2f+3ef^2+r+s.$$ Multiplying by $ef$ from two sides, we get
$$8e^2f+8ef^2=3ef^2+3e^2f+3rf^2+3se^2.$$ It follows that $5(e^2f+ef^2)=3(se^2+rf^2).$
Thus, $$2^3\times 3\times 5=5\times (3e^2f+3ef^2+r+s)=9(se^2+rf^2)+5(r+s).$$ This implies that $2^3\times 3\times 5ef=0$.
Clearly, $$3^2=e^2+2ef+f^2~\mbox{and}~3^4=e^4+4e^3f+6e^2f^2+4ef^3+f^4.$$ We obtain
$$2^2\times 3\times 5\times (3^4-3^2)=(2e^2+3ef+2f^2-1)\times 2^3\times 3\times 5ef=0.$$ That is, $2^5\times 3^3\times 5=0$.
Write $R_1=R/2^5R,R_2=R/3^3R$ and $R_3=R/5R$. Then $R\cong R_1\times R_2\times R_3$.

Step 1. Let $b\in J(R_1)$. Write $b=e+f$ where $e,f\in R_1$ are 2-idempotents. Then $b^2-b^4=2ef-4e^3f-6e^2f^2-4ef^3$. As $2^5=0$ in $R_3$, we see that
$(b^2-b^4)^5=0$, and so $b^{10}=0$. Thus, $R_1$ is of bounded index $10$.

Step 2. Let $b\in J(R_2)$. Write $b=e+f$ where $e,f\in R_2$ are 2-idempotents. Then $b^3=e^3+f^3+3ef(e+f)$. Hence, $b-b^3=(e-e^3)+(f-f^3)-3ef(e+f)$.
It follows that $b^2(b-b^3)=f^2(e-e^3)+e^2(f-f^3)-3efb^3$ and $b^4(b-b^3)=f^4(e-e^3)+e^4(f-f^3)-3efb^5$.
Thus, $\big(b^4(b-b^3)-b^2(b-b^3)\big)^3=0$, as $3^3=0$ in $R_2$. This implies that $b^9=0$, and so $R_2$ is of bounded index $9$.

Step 3. Let $b\in J(R_3)$. Write $b=e+f$ where $e,f\in R_3$ are commuting 2-idempotents. Then
$b^5=e^5+5e^4f+10e^3f^2+5e^2f^3+10ef^4+f^5=e^5+f^5=e^3+f^3$ as $5=0$. Hence, $b-b^5=(e-e^3)+(f-f^3)$.
It follows that $b^2(b-b^5)=f^2(e-e^3)+e^2(f-f^3)$ and $b^4(b-b^5)=f^4(e-e^3)+e^4(f-f^3)$. As $e$ and $f$ are 2-idempotents, we see that
$b^2(b-b^5)=b^4(b-b^5)$; hence, $b^3=0$. That is, $R_3$ is of bounded index $3$.

Therefore $R$ is of bounded index.\end{proof}

With this information we can now express every matrix over such rings by means of tripotent and nilpotent matrices.

\begin{thm} Let $R$ be a ring in which every element in $R$ is the sum of two commuting 2-idempotents. Then every $n\times n$ matrix over $R$ is the sum of
two tripotent matrices and a nilpotent.\end{thm}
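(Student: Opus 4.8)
The plan is to reduce the statement, through a chain of simplifications, to the single fact that a matrix over each of the prime fields $\mathbb{F}_2$, $\mathbb{F}_3$, $\mathbb{F}_5$ is a sum of two tripotent matrices and a nilpotent.

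\emph{Step 1 (splitting off $2,3,5$).} Since every element of $R$ is a sum of two commuting $2$-idempotents, $R$ is Zhou nil-clean by Theorem~3.2, of bounded index by Lemma~3.6, and satisfies $30\in N(R)$ by Lemma~3.1; hence $2^k3^k5^k=0$ for some $k$ and $R\cong R_1\times R_2\times R_3$ with $2^k=0$ in $R_1$, $3^k=0$ in $R_2$, $5^k=0$ in $R_3$. Then $M_n(R)\cong M_n(R_1)\times M_n(R_2)\times M_n(R_3)$, and since the property ``every matrix is a sum of two tripotents and a nilpotent'' is inherited by finite direct products, it suffices to treat each $M_n(R_i)$; every $R_i$ is again Zhou nil-clean of bounded index.

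\emph{Step 2 (killing the radical and passing to prime fields).} For a Zhou nil-clean ring $S$, \cite[Theorem 2.11]{Z} gives that $J(S)$ is nil and $S/J(S)$ satisfies $x^5=x$; thus $S/J(S)$ is reduced, so $N(S)\subseteq J(S)$ and $J(S)=N(S)$ has bounded index, whence $M_n(J(S))=J(M_n(S))$ is a nil ideal of $M_n(S)$ with $M_n(S)/M_n(J(S))\cong M_n(S/J(S))$. Idempotents lift modulo a nil ideal; moreover, if $2$ is a unit, every tripotent $E$ equals $e-f$ with $e=\tfrac{1}{2}(E^2+E)$, $f=\tfrac{1}{2}(E^2-E)$ orthogonal idempotents, so tripotents lift too whenever $2\in U(S/J(S))$; and a preimage of a nilpotent modulo a nil ideal is nilpotent. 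Hence it is enough to decompose matrices over $M_n(S/J(S))$, using only idempotents or differences of orthogonal idempotents, and then lift. For $S=R_i$, the ring $\overline{R_1}:=R_1/J(R_1)$ is Boolean, and $\overline{R_2},\overline{R_3}$ are commutative reduced rings satisfying $x^5=x$, of characteristic $3$ and $5$ respectively (so $2$ is a unit in these two). Finally, for $A\in M_n(\overline{R_i})$ the subalgebra $S_0$ over the prime field $\mathbb{F}_p$ generated by the finitely many entries of $A$ is a finitely generated $\mathbb{F}_p$-algebra whose generators are integral over $\mathbb{F}_p$, hence finite; being reduced, commutative, of characteristic $p$ and satisfying $x^5=x$, it is a finite product of copies of $\mathbb{F}_p$, so $A$ lies in a matrix ring over $\mathbb{F}_p$. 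It remains to show that every matrix over $\mathbb{F}_2$, $\mathbb{F}_3$, $\mathbb{F}_5$ is a sum of two tripotents and a nilpotent; for $\mathbb{F}_2$ this is immediate since $M_n(\mathbb{F}_2)$ is nil-clean, a nil-clean expression $A=E+W$ being the same as $A=E+0+W$.

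\emph{Step 3 (matrices over $\mathbb{F}_3$ and $\mathbb{F}_5$).} Fix $p\in\{3,5\}$. Conjugation preserves the conclusion, so I put $A\in M_n(\mathbb{F}_p)$ in primary rational canonical form; since a block-diagonal matrix is a sum of two tripotents and a nilpotent as soon as each of its diagonal blocks is, I reduce to a single block, the companion matrix of $f(x)^m$ with $f$ irreducible of degree $d$. If $d=1$ the block is $cI+N_0$ with $N_0$ a nilpotent Jordan block, and $cI=aI+bI$ for a splitting $c=a+b$ with $a,b\in\{0,\pm1\}$ (available for every $c\in\mathbb{F}_p$ because $p\le5$), so it is a sum of two scalar tripotents and a nilpotent. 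If $d\ge2$, the block is multiplication by $\bar x$ on $\mathbb{F}_p[x]/(f^m)$, and by Hensel's lemma this ring is $\mathbb{F}_{p^d}\oplus\mathfrak m$ with $\mathfrak m$ nilpotent, so $\bar x=\alpha+\beta$ with $\alpha$ a generator of $\mathbb{F}_{p^d}$ and $\beta\in\mathfrak m$; multiplication by $\beta$ is nilpotent, and multiplication by $\alpha$ is $\mathbb{F}_p$-similar to a direct sum of copies of the companion matrix $C_f$ of $f$, so it suffices to write $C_f$ as a sum of two tripotent matrices over $\mathbb{F}_p$, by an explicit choice of tripotents $E_1,E_2\in M_d(\mathbb{F}_p)$ with $E_1+E_2=C_f$.

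\emph{Where the difficulty lies.} The technical heart is precisely this last point: expressing the companion matrix of an irreducible polynomial of degree $d\ge2$ over $\mathbb{F}_3$ (resp.\ $\mathbb{F}_5$) as a sum of two tripotent matrices. Commuting tripotent summands are useless, since a commuting sum of two tripotents has all eigenvalues in $\{0,\pm1,\pm2\}\subseteq\mathbb{F}_p$, whereas $C_f$ has eigenvalues in $\mathbb{F}_{p^d}\setminus\mathbb{F}_p$ — and over $\mathbb{F}_5$ even $2$ is not a tripotent plus a nilpotent, so the second tripotent is genuinely needed — so one is forced into a non-commuting configuration and must control the eigenvalue multiset (equivalently the trace and power-sums) of $E_1$ so that $C_f-E_1$ is again a tripotent; making this construction uniform in $d$, presumably by reducing $f$ to a few normal forms and building $E_1,E_2$ blockwise, is the crux. (If a matrix-ring decomposition theorem for strongly nil-clean, strongly $2$-nil-clean, or $x^5=x$ rings is available, Step~3 can instead be replaced by a citation.)
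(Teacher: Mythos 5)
Your reduction (Steps 1--2) is sensible and broadly parallels what must happen somewhere, but the proposal is not a proof: it terminates in an admitted gap. The paper itself disposes of the theorem in three lines --- Theorem 3.2 gives that $R$ is Zhou nil-clean, Lemma 3.6 gives that $R$ is of bounded index, and then it invokes \cite[Theorem 3.7]{MC} (the authors' companion paper \emph{Matrices over Zhou nil-clean rings}), which is precisely where the matrix-theoretic content lives. You have chosen to reprove that cited result from scratch, and you correctly identify where its difficulty sits, but you do not resolve it: the claim that the companion matrix of an irreducible polynomial of degree $d\ge 2$ over ${\Bbb F}_3$ or ${\Bbb F}_5$ is a sum of two tripotent matrices is exactly the ``technical heart'' you flag and then leave unproved. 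Everything before that point (splitting off $2,3,5$; passing modulo the nil radical using bounded index; lifting idempotents, and tripotents via orthogonal idempotent pairs when $2$ is a unit; shrinking to a finite subring and hence to $\prod{\Bbb F}_p$; rational canonical form; the Hensel/coefficient-field splitting $\bar x=\alpha+\beta$) is a legitimate reduction, so the entire theorem now rests on the one statement you do not establish. A reduction to an unproved claim, however well-motivated, is not a proof, and your own closing parenthesis --- that Step 3 could ``be replaced by a citation'' if such a theorem is available --- concedes as much.

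Two smaller points. First, in Step 2 you assert that $M_n(J(S))$ is nil because $J(S)$ is nil of bounded index; this is true but is itself a theorem (Levitzki: nil of bounded index implies locally nilpotent, and local nilpotence passes to matrix rings) and should be cited rather than asserted --- it is the entire reason Lemma 3.6 exists in the paper. Second, your observation that commuting tripotents cannot work for $d\ge 2$ is correct and worth keeping, since it explains why the needed construction is genuinely noncommutative and hence why it cannot be extracted from the ``commuting'' decompositions already proved for the ring $R$ itself. To complete the argument you must either supply explicit tripotents $E_1,E_2\in M_d({\Bbb F}_p)$ with $E_1+E_2=C_f$ for each irreducible $f$ of degree $d\ge 2$ over ${\Bbb F}_3$ and ${\Bbb F}_5$ (controlling, say, the rational canonical forms of $E_1$ and $C_f-E_1$ so that both annihilate $x^3-x$), or simply cite \cite[Theorem 3.7]{MC} as the paper does --- in which case Steps 1--3 become unnecessary.
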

\begin{proof} In view of Theorem 3.2, $R$ is Zhou nil-clean. By virtue of Lemma 3.6, $R$ is of bounded index. Therefore we complete the proof, by ~\cite[Theorem 3.7]{MC}.\end{proof}

\section{Kosan rings}

This section is devoted to a collection of elementary properties of Kosan rings which will be used in the sequel. We start by

\begin{prop}
\end{prop}
\begin{enumerate}
\item [(1)] {\it Every finite direct product of Kosan rings is a Kosan ring.}
\vspace{-.5mm}
\item [(2)] {\it Every subring of a Kosan ring is a Kosan ring.}
\vspace{-.5mm}
\item [(3)] {\it If $R$ is a Kosan ring, then $eRe$ is a Kosan ring for all idempotents $e\in R$.}
\end{enumerate}
\begin{proof} $(1)$ Let $S=\prod\limits_{i=1}^{n}R_{i}$, where each $R_i$ is a Kosan ring, if $u=(u_1, u_2,\cdots, u_n)\in U(S)$, then each $u_i\in U(R_i)$ and by hypothesis $u_{i}^{4}=1+w_i$ for some nilpotent $w_i\in R_i$. So we have $u^4=(1,1,\cdots, 1)+(w_1, w_2, \cdots, w_n)$ is a unipotent.

$(2)$ Let $S$ be a subring of a Kosan ring $R$ and $u\in U(S)$, then $u\in U(R)$ and so $u^4=1_R+w$ for some $w\in N(R)$. We infer that $w=u^4-1_S\in N(S)$. Hence, $S$ is Kosan.

$(3)$ Let $u\in U(eRe)$ with $e=e^2\in R$. Then $u+1-e\in U(R)$. By hypothesis, $(u+1-e)^4=1+w$ for some $w\in N(R)$. Hence, $u^4=e+w$, and so $w=u^4-e\in eRe$. Hence, $w\in N(eRe)$, as desired.\end{proof}

We note that the infinite products of Kosan rings may be not a Kosan ring. For instance, let $R=\prod\limits_{n=1}^{\infty|}{\Bbb Z}_{5^n}$. Then each ${\Bbb Z}_{5^n}$ is Zhou nil-clean, but $R$ is not Zhou nil-clean. Clearly, $a=(2,2,\cdots ,2,\cdots )\in R$, while $a-a^5\in N(R)$, and we are done by~\cite[Theorem 2.11]{Z}.

\begin{lem} Let $I$ be a nil ideal of a ring $R$. Then $R$ is a Kosan ring if and only if so is $R/I$.\end{lem}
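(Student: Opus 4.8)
The plan is to prove both directions by reducing to elements of the quotient. The nontrivial content is entirely about lifting: a unit in $R/I$ must be shown to lift to a unit in $R$ (so that the Kosan condition in $R$ can be applied), and conversely a unipotent in $R/I$ must be shown to lift to a unipotent in $R$ (so that the Kosan condition in $R$ transfers down). Since $I$ is a nil ideal, $I \subseteq J(R)$, and both lifting facts are standard consequences of this.

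For the direction where $R$ is Kosan: take $\bar u \in U(R/I)$. Since $I$ is nil, hence contained in $J(R)$, units lift modulo $I$, so pick $u \in U(R)$ with $u - \bar u \in I$ (more precisely, $u + I = \bar u$). By hypothesis $u^4 = 1 + w$ with $w \in N(R)$. Passing to $R/I$ gives $\bar u^4 = \bar 1 + \bar w$, and $\bar w$ is nilpotent in $R/I$ because the image of a nilpotent is nilpotent. Hence $\bar u^4$ is unipotent, so $R/I$ is Kosan.

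For the converse, where $R/I$ is Kosan: take $u \in U(R)$. Then $\bar u \in U(R/I)$, so $\bar u^4 = \bar 1 + \bar v$ for some nilpotent $\bar v \in R/I$; equivalently $u^4 - 1 \in R$ maps to a nilpotent in $R/I$, i.e. $(u^4 - 1)^k \in I$ for some $k$. Since $I$ is nil, $(u^4-1)^k$ is nilpotent in $R$, and therefore $u^4 - 1$ is nilpotent in $R$ (a ring element whose power is nilpotent is itself nilpotent). Thus $u^4$ is unipotent in $R$, so $R$ is Kosan.

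The only step requiring care — and the ``main obstacle'' insofar as there is one — is the unit-lifting in the first direction: one must invoke that $I$ nil implies $I \subseteq J(R)$, and that idempotents-plus-units behave well modulo $J(R)$; concretely, if $x \in R$ maps to a unit of $R/I$ then $x$ is already a unit of $R$, since $1 - xy \in I \subseteq J(R)$ for a suitable $y$ forces $xy \in U(R)$, and similarly on the other side. Everything else is a routine application of the fact that homomorphic images of nilpotents are nilpotent and that nil ideals detect nilpotence of preimages. I would write the proof in the two-paragraph form above, citing only that nil ideals are contained in the Jacobson radical.
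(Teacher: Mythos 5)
Your proof is correct and follows essentially the same route as the paper: lift units modulo the nil ideal (via $I\subseteq J(R)$) for the forward direction, and for the converse use that $(u^4-1)^k\in I$ with $I$ nil forces $u^4-1\in N(R)$. If anything, you justify the final nilpotence step more explicitly than the paper does.
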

\begin{proof} $\Longrightarrow$ This is obvious as every unit lifts modulo every nil ideal.

$\Longleftarrow$ Let $u\in U(R)$, then $ \bar{u}^4=\bar{1}+\bar{w}$ for $\bar{w}\in N(R/I)$. Hence, $ u^4= 1+w+r$ for some $r\in I$. Here $w+r\in N(R)$.\end{proof}

We use $T_n(R)$ to denote the ring of all $n\times n$ upper triangular matrices over a ring $R$. We have

\begin{thm} Let $R$ be a ring. Then the following are equivalent:
\end{thm}
\begin{enumerate}
\item [(1)] {\it $R$ is a Kosan ring.}
\vspace{-.5mm}
\item [(2)] {\it $T_n(R)$ is a Kosan ring for all $n\in {\Bbb N}$.}
\vspace{-.5mm}
\item [(3)] {\it $T_n(R)$ is a Kosan ring for some $n\in {\Bbb N}$.}
\end{enumerate}
\begin{proof} $(1)\Rightarrow (2)$ Choose $I= \{
\left(
\begin{array}{cccc}
0&a_{12}&\cdots&a_{1n}\\
&0&\cdots &a_{2n}\\
&&\ddots&\vdots\\
&&&0
\end{array}
\right)\in T_n(R) | $ each $a_{ij}\in R\}$  Then $I$ is a nil ideal of $R$. As $T_n(R)/I\cong \prod\limits_{i=1}^{n}R_{i}$ be the direct product of rings
$R_i\cong R$, it follows by Proposition 4.1, that $\prod\limits_{i=1}^{n}R_{i}$ is a Kosan ring ring. In light of Proposition 4.2, $T_n(R)$ is a Kosan ring, as required.

$(2)\Rightarrow (3)$ This is trivial.

$(3)\Rightarrow (1)$ Let $e=diag(1,0,\cdots ,0)$. Then $R\cong eT_n(R)e$. We complete the proof by Proposition 3.1.
\end{proof}

\begin{exam} Let $R$ be a ring. Then $M_2(R)$ is not a Kosan ring.\end{exam}
\begin{proof} Assume that $M_2(R)$ is a Kosan ring. Since
$\left(
\begin{array}{cc}
1&1\\
-1&0
\end{array}
\right)=\left(
\begin{array}{cc}
0&-1\\
1&1
\end{array}
\right)^{-1}\in GL_2(R)$, $$A:=\left(
\begin{array}{cc}
-2&-1\\
1&-1
\end{array}
\right)=\left(
\begin{array}{cc}
1&1\\
-1&0
\end{array}
\right)^4-I_4\in N(M_2(R)).$$ Let $S=\{ m\cdot 1_R~|~m\in {\Bbb Z}\}$. Then $S$ is a commutative subring of $R$.
As $A\in N(M_2(S))$, we see that $det(A)=3\in N(S)$, and so $3\in N(R)$.

Since $2=3-1\in U(R)$, $\left(
\begin{array}{cc}
1&1\\
-1&1
\end{array}
\right)=\left(
\begin{array}{cc}
2^{-1}&-2^{-1}\\
2^{-1}&2^{-1}
\end{array}
\right)^{-1}\in GL_2(R)$, and then $$\left(
\begin{array}{cc}
-5&0\\
0&-5
\end{array}
\right)=\left(
\begin{array}{cc}
1&1\\
-1&1
\end{array}
\right)^4-I_4\in GL_2(R).$$ This implies that $5\in N(R)$, and so $1_R=(3\cdot 2-5)\cdot 1_R\in N(R)$, a contradiction.
This completes the proof.\end{proof}

Recall that a ring $R$ is local if $R$ has only one maximal right ideal, i.e., $R/J(R)$ is a division ring.

\begin{exam} A local ring $R$ is a Kosan ring if and only if
\end{exam}
\begin{enumerate}
\item [(1)] {\it $J(R)$ is nil;}
\vspace{-.5mm}
\item [(2)] {\it $R/J(R)\cong {\Bbb Z}_2, {\Bbb Z}_3$ or ${\Bbb Z}_5$.}
\end{enumerate}
\begin{proof} $\Longrightarrow$ As $(2,3)=1$, we see that $2\in U(R)$ or $3\in U(R)$. If $2\in U(R)$, then $2^4-1\in N(R)$, and so $3\times 5\in N(R)$.
If $3\in U(R)$, then $3^4-1\in N(R)$, and so $2\times 5\in N(R)$. As $(2,3,5)=1$, we see that $p\in N(R)$ where $p=2,3$ or $5$.
Let $x\in J(R)$. Then $1+x^p\equiv (1+x)^p\equiv 1 (mod~N(R))$, and so $x^p\equiv 0 (mod~N(R))$. That is, $x\in N(R)$.
Thus, $J(R)$ is nil.

Since $R$ is local, $R/J(R)$ is a Kosan division ring. Let $0\neq x\in R/J(R)$. Then $x^4=1$, and so $(x^2+1)(x-1)(x+1)=0$. Hence, $x=-1,1$ or $x^{2}=-1$.
Suppose that $y\not\in \{ 0,1,-1,2,-2\}$. Then $y^2=-1$, and so $y+1\neq 0,-1,1$. Hence, $(y+1)^2=-1$, i.e., $2y=-1$. Thus, $2y=y^2$, and then $y=2$,
a contradiction. Therefore $|R/J(R)|\leq 5$, and so $R/J(R)\cong {\Bbb Z}_2, {\Bbb Z}_3$ or ${\Bbb Z}_5$.

$\Longleftarrow$ Since ${\Bbb Z}_2, {\Bbb Z}_3$ and ${\Bbb Z}_5$ are all Kosan rings, we complete the proof by Lemma 4.2.\end{proof}

\begin{exam} Let $R={\Bbb Z_n} (n\geq 2)$. Then $R$ is a Kosan ring if and only if $n=2^k3^l5^{s} (k,l,s~\mbox{are nonnegitive integers} )$.\end{exam}
\begin{proof}$ \Longrightarrow$ Write $n=p_1^{r_1}\cdots p_s^{r_s}$ where $p_1,\cdots,p_s$ are primes and $r_1,\cdots,r_s\in {\Bbb N}$. Then $R\cong \bigoplus\limits_{i=1}^{s}{\Bbb Z}_{p_i^{r_i}}$.
As every homomorphic image of Kosan rings is a Kosan ring, each ${\Bbb Z}_{p_i^{r_i}}$ is a local Kosan ring. In light of Example 4.5,
${\Bbb Z}_{p_i^{r_i}}/J({\Bbb Z}_{p_i^{r_i}})\cong {\Bbb Z}_{p_i}$ is ${\Bbb Z}_2, {\Bbb Z}_3$ or ${\Bbb Z}_5$. Thus $p_i=2,3$ or $5$, as desired.

$\Longleftarrow$ As $n=2^k3^l5^m$, we see that ${\Bbb Z}_n\cong R_1\times R_2\times R_3$, where $R_1= {\Bbb Z}_{2^k}, R_2={\Bbb Z}_{3^l}$ and $R_3={\Bbb Z}_{5^m}$. It is obvious that each $J(R_i)$ is nil and $R_1/J(R_1)\cong {\Bbb Z}_2$, $R_2/J(R_2)\cong {\Bbb Z}_3$, and $R_3/J(R_3)\cong
{\Bbb Z}_5$. Then by Example 4.5, $R$ is a Kosan ring.\end{proof}

\section{Exchange Properties}

The class of exchange rings is
very large. It includes all regular rings, all $\pi$-regular rings,
all strongly $\pi$-regular rings, all semiperfect rings, all left or
right continuous rings, all clean rings, all unit $C^*$-algebras of
real rank zero and all right semi-artinian rings (see~\cite{CH}). We now characterize Zhou nil-clean rings in terms of their exchange properties.
We need an elementary lemma.

\begin{lem} Let $R$ be an exchange Kosan ring. Then $30\in R$ is nilpotent.\end{lem}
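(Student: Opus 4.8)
The plan is to mimic the structure of the proof of Lemma 3.1, since an exchange Kosan ring is in particular a ring whose units satisfy $u^4 \equiv 1$ modulo nilpotents, and exchange rings decompose well. First I would reduce the problem modulo $J(R)$: by Lemma 4.2, $R$ is Kosan if and only if $R/J(R)$ is Kosan, and $R/J(R)$ is again an exchange ring (exchange rings are preserved under quotients, and idempotents lift). A nilpotent element of $R/J(R)$ need not be nilpotent in $R$, but to prove $30 \in N(R)$ it suffices to prove $30 \in J(R)$ together with the fact that $30 \cdot 1_R$ generates a nil ideal; actually the cleanest route is to first show $30$ is nilpotent in $R/J(R)$, hence $30 \in J(R)$, and then observe that in an exchange ring the element $30$, lying in $J(R)$, need not be nilpotent in general — so instead I would work with the image in $R/J(R)$ and use that $R/J(R)$ is a reduced exchange Kosan ring, prove $30 = 0$ there, and separately handle lifting. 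Let me reconsider: the statement only claims $30 \in N(R)$, so I should aim to produce an explicit polynomial identity forcing nilpotence, exactly as in Lemma 3.1.

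Concretely, the key computation is the following. Since $R$ is an exchange ring, for $a = 3 \in R$ there is an idempotent $e \in R$ with $e \in 3R$ and $1-e \in (1-3)R = (-2)R$; equivalently (using that idempotents lift and exchange rings are "clean-like" on this element) I can find an idempotent $e$ and decompose $R \cong eRe \times (1-e)R(1-e)$ where $3$ is a unit in one corner and $3$ lies... hmm, $3 \in 3R$ forces $e \in 3R$, and $1-e \in 2R$. In the corner $eRe$, $2 = 3 - 1$ — I want $3$ invertible in $eRe$ and $2$ invertible in $(1-e)R(1-e)$. Since $e \in 3R$ gives $e = 3x$ for some $x$, we get $3$ is a unit in $eRe$ (right invertible, and $eRe$... one must be careful, but the exchange condition on the specific element $3$ does give $3e$ invertible in $eRe$). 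Then in $eRe$, which is Kosan by Proposition 4.1(3) hence has every unit satisfying $u^4 \equiv 1 \pmod N$: apply this to suitable units built from $3$, exactly as in Example 4.4 where $2, 3 \in U$ forced $3 \times 5$ or $2 \times 5 \in N$. In the corner $(1-e)R(1-e)$, $2$ is a unit, so $2^4 - 1 = 15 \in N$ there, giving $3 \times 5$ nilpotent in that corner; in $eRe$, $3$ a unit gives $3^4 - 1 = 80 = 2^4 \times 5 \in N$, so $2 \times 5 \in N$ there. Combining over the product decomposition, $2 \times 3 \times 5 = 30$ is killed by a power in each corner, hence $30 \in N(R)$.

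The main obstacle I anticipate is the corner-decomposition bookkeeping: the exchange property gives an idempotent $e$ with $e \in 3R$ and $1-e \in 2R$, but turning "$e \in 3R$" into "$3$ is a unit in $eRe$" requires the observation that $3R \cap Re$ interacts correctly — one uses that $e \in 3R$ means $e = 3r$, so $3(re) = e$ in $eRe$, giving $3e$ right-invertible in $eRe$, and since $eRe$ has $3e$ also left-invertible (from $e \in 3R$ being symmetric enough, or by passing to $R/J$ and Nakayama-type lifting), $3 \in U(eRe)$; symmetrically $2 \in U((1-e)R(1-e))$. I would spell this out carefully, or alternatively avoid it by reducing to $R/J(R)$ first, where the exchange ring has the nice property that every such $e$ genuinely splits $R/J(R)$ as a product with $3$ a unit in one factor and $2$ a unit in the other, prove $30$ is nilpotent (in fact nilpotent of bounded order) in $R/J(R)$, conclude $30 \in J(R)$; then, since the same argument applies to $R/J(R)$ being reduced is not available — so I will instead note that $30 \in J(R)$ plus Kosan forces nilpotence by feeding $1 + 30$-type units back through the Kosan identity, or simply observe $J(R)$ in an exchange Kosan ring... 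Actually the safest finish: once $30 \in J(R)$, write $R/30R$; one checks $R/30R$ has $30 = 0$ hence factors over $\mathbb{Z}/2, \mathbb{Z}/3, \mathbb{Z}/5$-characteristic pieces each strongly $\pi$-regular-ish, and $J$ is nil there — but this circularity suggests the explicit-identity approach of Lemma 3.1 is cleanest, so I would follow that template, deriving $2^a 3^b 5^c = 0$ outright from the two unit identities $2^4 \equiv 1$ and $3^4 \equiv 1$ holding on complementary corners.
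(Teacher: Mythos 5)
Your core argument is sound, and it reaches the conclusion by a genuinely different route from the paper's. Both proofs begin identically, applying the exchange property to the element $3$ to obtain an idempotent $e\in 3R$ with $1-e\in 2R$; but the paper then stays inside $R$ itself, showing that $u=3-(1-e)$ is a single unit of $R$ (with explicitly computed inverse), applying the Kosan condition to that one unit, and extracting $2^4\times 3\times 5^3\in N(R)$ from the resulting identity $80=65e+w$. You instead pass to the two Peirce corners: from $e=3x$ one gets $(3e)(exe)=(exe)(3e)=3exe=e$, so $3e\in U(eRe)$, and likewise $2(1-e)\in U((1-e)R(1-e))$; Proposition 4.1(3) then gives $80e=(3e)^4-e\in N(eRe)$ and $15(1-e)=(2(1-e))^4-(1-e)\in N((1-e)R(1-e))$, whence $30^{4n}e=3^{4n}5^{3n}(80e)^n=0$ and $30^m(1-e)=2^m(15(1-e))^m=0$ for suitable $n,m$, and so $30^K=30^Ke+30^K(1-e)=0$ for large $K$. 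Your version is more conceptual and avoids the paper's numerical manipulation, but two points in your write-up need repair: (i) the asserted isomorphism $R\cong eRe\times(1-e)R(1-e)$ is false for a non-central idempotent, so you must not ``combine over the product decomposition''; the combination step has to be justified as above, using that $30$ is central and $e$, $1-e$ are orthogonal so that all cross terms in $30^K=(30e+30(1-e))^K$ vanish; (ii) the left-invertibility of $3e$ in $eRe$, on which you hedge, follows at once from the centrality of $3$ via $(exe)(3e)=3(exe)=e$ --- no passage to $R/J(R)$ or Nakayama-type lifting is needed. With those two repairs (and discarding the abandoned first-paragraph detour through $J(R)$), the proof is complete.
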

\begin{proof} Since $R$ is an exchange ring, there exists an idempotent $f\in 3R$  such that $1-f\in (1-3)R$. Write $f=3a$ for some $a\in R$. We may assume that $a=fa$, then $3a^2=3a$. Then $1-f=(1-3)b$ for some $b\in R$ with $b(1-f)=b$, therefore $(-2)b^2=b$. Now we have $3=3-(1-f)+(1-f)$. It is obvious that $3-(1-f)$ is a unit with inverse $a-b$.

Set $e=1-f$ and $u=3-(1-f)$. Then $3=e+u$. By hypothesis, $u^4=1+w$, so $(3-e)^4=1+w$, then $81-65e=1+w$ which implies that $80=65e+w$. Thus, $80e=65e+we$, hence, $15e=we$. Also $80=65e+w=60e+5e+w=5e+(4e+1)w$. Then $80^2=5^2e+10e(4e+1)w+(4e+1)^2w^2$, and $5\times 80=5^2e+5(4e+1)w$; whence, $80^2-5\times 80\in N(R)$. This implies $2^4\times 3\times 5^3\in N(R)$ and so $2\times 3\times 5\in N(R)$.\end{proof}

\begin{lem} Let $R$ be an exchange Kosan ring, then $J(R)$ is nil.\end{lem}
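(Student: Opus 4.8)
The plan is to feed Lemma 5.2 into a Chinese-Remainder decomposition and then kill $J$ on each of the three pieces separately. First I would apply Lemma 5.2 to get $30\in N(R)$, so $2^{n}3^{n}5^{n}=0$ for some $n\in{\Bbb N}$; as $2^{n},3^{n},5^{n}$ are pairwise coprime this gives $R\cong R_{1}\times R_{2}\times R_{3}$ with $R_{1}=R/2^{n}R$, $R_{2}=R/3^{n}R$, $R_{3}=R/5^{n}R$, so that $2\in N(R_{1})$, $3\in N(R_{2})$, $5\in N(R_{3})$. Each $R_{i}$ is (isomorphic to) the corner $e_{i}Re_{i}$ of $R$ at a central idempotent, hence is again exchange (a direct factor of an exchange ring is exchange) and again Kosan by Proposition 4.1(3). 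Since $J(R)\cong J(R_{1})\times J(R_{2})\times J(R_{3})$, it is enough to show each $J(R_{i})$ is nil.

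Next, for $R_{1}$ I would take any $x\in J(R_{1})$, use $1+x\in U(R_{1})$ and the Kosan property to write $(1+x)^{4}=1+w$ with $w\in N(R_{1})$, i.e. $x^{4}=w-4x-6x^{2}-4x^{3}$; since $2\in N(R_{1})$ is central, $4$ and $6$ are nilpotent in $R_{1}$, so $4x,6x^{2},4x^{3}\in N(R_{1})$, and as all these and $w$ are polynomials in $x$ they commute, whence $x^{4}$, and therefore $x$, is nilpotent.

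For $R_{2}$ and $R_{3}$ I would use one argument. Since $3$ (resp. $5$) is nilpotent, $4=1+3$ (resp. $16=1+15$) is a unit, so $2$, and hence $8=2^{3}$, is a unit in $R_{2}$ (resp. $R_{3}$). Given $x\in J(R_{i})$, both $1+x$ and $1-x$ are units, so by the Kosan property $(1+x)^{4}-1$ and $(1-x)^{4}-1$ are nilpotent; they commute, so their difference $(1+x)^{4}-(1-x)^{4}=8x(1+x^{2})$ is nilpotent, and since $8\in U(R_{i})$, $1+x^{2}\in 1+J(R_{i})\subseteq U(R_{i})$, and both commute with $x$, we get $x\in N(R_{i})$. (For $R_{3}$ one could instead strip $5(x+x^{2}+x^{3})$ out of $(1+x)^{4}-1$ and factor the remaining $x^{4}-x^{3}+x^{2}-x=x(x-1)(x^{2}+1)$ as $x$ times two units.)

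Finally, combining the three cases, each $J(R_{i})$ is nil, hence so is $J(R)\cong J(R_{1})\times J(R_{2})\times J(R_{3})$. I expect no serious obstacle: once Lemma 5.2 is available the estimates are routine, and the only thing to be careful about is that the factors $R_{i}$ stay Kosan, which is precisely Proposition 4.1(3) via the central-idempotent splitting.
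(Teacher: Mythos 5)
Your proposal is correct and follows essentially the same route as the paper: decompose $R\cong R_1\times R_2\times R_3$ via $30\in N(R)$ (that is Lemma 5.1, not 5.2, which you cite by the wrong number), kill $J(R_1)$ using $2\in N(R_1)$ and the expansion of $(1\pm x)^4$, and kill $J(R_2),J(R_3)$ using that $2$ is a unit there. Your only deviation is cosmetic but pleasant: the unified identity $(1+x)^4-(1-x)^4=8x(1+x^2)$ handles both odd-prime factors at once, where the paper instead factors $(1+x)^4-1=x\bigl(x^3+2(2x^2+3x+2)\bigr)$ and repeats the argument.
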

\begin{proof} By virtue of Lemma 5.1, $30\in R$. Write $30^n=0$ for some $n\in {\Bbb N}$. Then $R\cong R_1\times R_2\times R_3$  where $R_1=R/2^nR, R_2=R/3^nR$   and $R_3=R/5^nR$. Let $x\in J(R_1)$ so $1-x\in U(R_1)$. Let $u=1-x$, as $R_1$ is a Kosan ring, $u^4+1$ is in $N(R_1)$. We have $(1-u)^4=1+u^4+2(3u^2-2u-2u^3)\in N(R_1)$, as $2\in N(R_1)$, then $(1-u)^4\in N(R_1)$ which implies that $1-u\in N(R_1)$ and so $x\in N(R_1)$. Now let $x\in N(R_2)$ then, $(1+x)^4=1+w$ for some $w\in N(R_2)$, so $x^4+2(x^3+3x^2+2x)=x(x^3+2(2x^2+3x+2))$, since $3\in N(R_2), 2\in U(R_2)$, and we have $2x^2+3x+2\in U(R_2)$, so $x^3+2(2x^2+3x+2)\in U(R_2)$, which implies that $x\in N(R_2)$. For $R_3$, as $5\in N(R_3)$ we deduce that $2\in U(R_3)$, so in the similar way, we can prove that $J(R_3)$ is nil. Therefore $J(R)$ is nil.\end{proof}

\begin{lem} Let $R$ be an exchange Kosan ring. If $J(R)=0$, then $R$ is reduced.\end{lem}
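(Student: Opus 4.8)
Let $R$ be an exchange Kosan ring. If $J(R)=0$, then $R$ is reduced.

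The plan is to reduce to rings of prime characteristic and then derive a contradiction from Example 4.4 via a matrix corner. First I would invoke Lemma 5.1 to get $30\in N(R)$; since $30$ is central, $30R$ is a nilpotent ideal, so $30R\subseteq J(R)=0$ and $30=0$ in $R$. By the Chinese Remainder Theorem $R\cong R_1\times R_2\times R_3$ with $2=0$ in $R_1$, $3=0$ in $R_2$ and $5=0$ in $R_3$, and each $R_i$ is isomorphic to a corner $e_iRe_i$ of $R$ at a central idempotent $e_i$. Hence each $R_i$ is again a Kosan ring by Proposition 4.1, is an exchange ring (corner of an exchange ring), and has $J(R_i)=0$. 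Since a finite direct product of reduced rings is reduced, it suffices to prove each $R_i$ is reduced, so from now on I would assume $R$ is an exchange Kosan ring with $J(R)=0$ of prime characteristic $p\in\{2,3,5\}$; in particular $R$ is semiprime.

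Suppose, for a contradiction, that $R$ is not reduced; then (passing to a power) there is $0\neq a\in R$ with $a^2=0$. Since $J(R)=\bigcap\{\text{maximal left ideals}\}=0$, there is a maximal left ideal $M$ with $a\notin M$, so $Ra+M=R$ and we may write $ra+m=1$ with $m\in M$. Applying the exchange property to the element $ra$ produces an idempotent $e\in R(ra)\subseteq Ra$ with $1-e\in R(1-ra)=Rm\subseteq M$; hence $e\notin M$, so $e\neq 0$, and writing $e=ca$ we get $ea=ca^2=0$. Thus from any nonzero square-zero element we extract a nonzero idempotent $e=ca$ with $ea=0$. The goal is now to upgrade this (together with a companion idempotent obtained by pulling further idempotents out of the ideal $RaR$, using semiprimeness to keep them nonzero) to a $2\times 2$ system of matrix units $e_{11},e_{12},e_{21},e_{22}$ inside a corner $fRf$ of $R$, so that $fRf\cong M_2(S)$ with $S=e_{11}Re_{11}\neq 0$. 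Granting this, $fRf$ would be a Kosan ring by Proposition 4.1, so $M_2(S)$ would be Kosan, contradicting Example 4.4; therefore $R$ must be reduced, and hence so is the original ring.

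The step I expect to be the main obstacle is exactly the passage from the single idempotent $e=ca$ (with $ea=0$) to a genuine $2\times 2$ matrix-unit system, because $R$ is only an exchange ring, not von Neumann regular, so the square-zero element $a$ need not satisfy $a=axa$. (In the regular model one would simply take $p=ax$, $q=xa$, orthogonalize to $q'=q-pq$, and check $paq'=a$, $b:=q'xp$, $ab=p$, $ba=q'$, giving $\{p,a,b,q'\}$ at once.) To bridge this gap one has to exploit the exchange hypothesis together with $J(R)=0$ and the characteristic restriction more heavily — either by refining idempotents inside $RaR$ until the needed partial invertibility of $a$ appears, or by handling the three characteristics separately: when $p=2$ the Kosan identity gives $(u-1)^4=u^4-1\in N(R)$ for every unit $u$, so $R$ is a $UU$ ring and one reduces to the known fact that a $UU$ ring with zero Jacobson radical is Boolean, hence reduced, and analogous elementary reductions are available for $p=3$ and $p=5$.
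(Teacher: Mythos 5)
Your overall strategy coincides with the paper's: assume $0\neq a\in R$ with $a^2=0$, manufacture a corner $eRe\cong M_2(T)$ with $T\neq 0$, and derive a contradiction from Example 4.4 via Proposition 4.1(3). However, the step you yourself flag as ``the main obstacle'' --- passing from the single idempotent $e=ca$ with $ea=0$ to a genuine system of $2\times 2$ matrix units --- is precisely the mathematical content of the lemma, and your proposal does not carry it out. The paper does not construct the matrix units by hand either: it invokes \cite[Theorem~2.1]{L} (Levitzki), which asserts that a semiprime ring in which every non-nil one-sided ideal contains a nonzero idempotent (true for an exchange ring with $J(R)=0$) is either reduced or has a corner $eRe\cong M_2(T)$ for some nonzero ring $T$. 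Without that citation, or a complete construction, your argument is not yet a proof: extracting one idempotent from $Ra$ is routine, but producing the off-diagonal matrix units requires exactly the partial-invertibility bookkeeping that Levitzki's theorem packages, and you explicitly leave it open.

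The fallback you sketch is also not a repair. For characteristic $2$ the reduction to $UU$ rings is plausible but leans on another external theorem (an exchange $UU$ ring with zero radical is Boolean, due to Danchev--Lam and Ko\c{s}an--Wang--Zhou), and the claim that ``analogous elementary reductions are available for $p=3$ and $p=5$'' is unsubstantiated: in characteristic $5$ the Kosan condition only says $u^4$ is unipotent, there is no evident analogue of the $UU$ reduction, and proving reducedness there is essentially the lemma itself. Your preliminary decomposition $R\cong R_1\times R_2\times R_3$ is correct ($30\in N(R)$ is central, so $30R\subseteq J(R)=0$ and $30=0$) but is not needed. In short: you identified the right target, but the decisive step is missing; either cite Levitzki's theorem as the paper does, or supply the full matrix-unit construction.
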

\begin{proof} We claim that $N(R)=0$, if not there exists some $a\in R$
such that $a^2 = 0$. Since $R$ is an exchange ring and $J(R) = 0$,
by\cite[Theorem 2.1]{L}, $eRe\cong M_2(T)$ for some idempotent $e\in R$
and some ring $T$. As $R$ is a Kosan ring, then $eRe$ is also Kosan
which implies that $M_2(T)$ is a Kosan ring which is a contradiction with
Example 4.4. This shows that $N(R)=0$ and so R is reduced.\end{proof}

We have accumulated all the information necessary to prove the following.

\begin{thm} A ring $R$ is a Zhou nil-clean if and only if
\end{thm}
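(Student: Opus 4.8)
The statement to be established is that a ring $R$ is Zhou nil-clean if and only if $R$ is an exchange Kosan ring. The plan is to treat the two implications separately, leaning throughout on the structure result \cite[Theorem 2.11]{Z} (a ring $S$ is Zhou nil-clean precisely when $J(S)$ is nil and $S/J(S)$ satisfies $x^{5}=x$), together with Lemmas 5.1--5.3 and Proposition 4.1. For the forward direction, if $R$ is Zhou nil-clean then $J(R)$ is nil and $\overline{R}=R/J(R)$ satisfies $x^{5}=x$; such a ring is (strongly) von Neumann regular, since $a=a\cdot a^{3}\cdot a$, hence an exchange ring, and as idempotents lift modulo the nil ideal $J(R)$ it follows that $R$ is an exchange ring. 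For the Kosan condition, take $u\in U(R)$; its image $\bar u$ is a unit of $\overline{R}$ with $\bar u^{5}=\bar u$, so $\bar u^{4}=\bar 1$, whence $u^{4}-1\in J(R)\subseteq N(R)$ and $u^{4}$ is unipotent. Thus $R$ is Kosan.

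For the converse, assume $R$ is an exchange Kosan ring. By Lemma 5.1, $30\in N(R)$, so $2^{n}3^{n}5^{n}=0$ for some $n$ and $R\cong R_{1}\times R_{2}\times R_{3}$, where $p_{i}\in N(R_{i})$ with $p_{1}=2$, $p_{2}=3$, $p_{3}=5$. Each $R_{i}$ is again an exchange Kosan ring (a direct factor is a corner, Proposition 4.1), so $J(R_{i})$ is nil by Lemma 5.2, and by Lemma 4.2, together with the fact that quotients of exchange rings are exchange, $\overline{R}_{i}=R_{i}/J(R_{i})$ is an exchange Kosan ring with $J(\overline{R}_{i})=0$; hence $\overline{R}_{i}$ is reduced by Lemma 5.3, and $p_{i}=0$ in $\overline{R}_{i}$. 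It now suffices to prove that each $\overline{R}_{i}$ satisfies $x^{5}=x$: then $R/J(R)\cong\overline{R}_{1}\times\overline{R}_{2}\times\overline{R}_{3}$ satisfies $x^{5}=x$, and since $J(R)$ is nil, \cite[Theorem 2.11]{Z} gives that $R$ is Zhou nil-clean.

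To prove $\overline{R}_{i}$ satisfies $x^{5}=x$, observe that, being reduced, all its idempotents are central and, being reduced and Kosan, every unit $u$ satisfies $u^{4}=1$ exactly. Given $a\in\overline{R}_{i}$, the exchange property produces a central idempotent $e$ with $e\in a\overline{R}_{i}$ and $1-e\in(1-a)\overline{R}_{i}$; using that right inverses are two-sided in a reduced ring, one checks that $ae$ is a unit of the corner $e\overline{R}_{i}$ and $v:=(1-a)(1-e)$ is a unit of $(1-e)\overline{R}_{i}$, with $a(1-e)=(1-e)-v$. The Kosan identity in each corner gives $a^{4}e=e$, hence $a^{5}e=ae$, and $v^{4}=1-e$. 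When $p_{i}=5$, the Frobenius identity yields $\bigl((1-e)-v\bigr)^{5}=(1-e)-v^{5}=(1-e)-v=a(1-e)$, so $a^{5}=a$. When $p_{i}=2$, a unit whose fourth power equals the identity of a reduced ring of characteristic $2$ must equal that identity, so $ae=e$ and $a(1-e)=0$, again forcing $a^{5}=a$. When $p_{i}=3$ one must first show every unit $u$ of $\overline{R}_{2}$ satisfies $u^{2}=1$: if a unit $u$ of a corner $T$ had $u^{2}=-1_{T}$, then $1_{T}+u$ would be a unit of $T$ (its square being $2u$) with $(1_{T}+u)^{4}=(2u)^{2}=-4\cdot 1_{T}$, so the Kosan identity forces $5\cdot 1_{T}=0$, and with $3\cdot 1_{T}=0$ this gives $T=0$; granting this, the corner computation delivers $a^{2}e=e$ and $(1-a)^{2}(1-e)=1-e$, i.e.\ $a^{2}=2a$ on the $(1-e)$-corner, whence $a^{3}=a$ everywhere in characteristic $3$ and so $a^{5}=a$.

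I expect the characteristic-$3$ case to be the main obstacle. For $p=2$ and $p=5$ the corner computation closes at once, but for $p=3$ the bare identity $u^{4}=1$ on the ``bad'' corner only produces $(1-a)^{2}=1$, which alone does not force $a^{3}=a$; one must mine the Kosan condition a little further to exclude square roots of $-1$ among the units (the ring-theoretic reflection of $-1$ not being a square in $\mathbb{Z}_{3}$), after which characteristic $3$ upgrades $a^{2}=2a$ to $a^{3}=a$. The remaining points — lifting from $\overline{R}_{i}$ back to $R$ via nilpotence of $J(R)$, and the reduced-ring lemmas on centrality of idempotents and two-sided inverses — are routine.
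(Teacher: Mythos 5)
Your proof is correct and lands on the same two pillars as the paper --- $J(R)$ nil together with the identity $x^5=x$ in $R/J(R)$, then \cite[Theorem 2.11]{Z} --- but your route through the converse is genuinely different. The paper passes directly to $S=R/J(R)$, writes the reduced ring $S$ as a subdirect product of domains $S_i$, invokes \cite[Lemma 17.2.1]{CH} to see that each $S_i$, being an exchange ring with only trivial idempotents, is local, and then quotes Example 4.5 to conclude $S_i\cong {\Bbb Z}_2, {\Bbb Z}_3$ or ${\Bbb Z}_5$. You instead split $R\cong R_1\times R_2\times R_3$ via Lemma 5.1 first, pass to the reduced quotients $\overline{R}_i$, and prove $x^5=x$ there by a direct Pierce-corner computation: the exchange idempotent $e$ is central, $ae$ and $(1-a)(1-e)$ are units of the two corners (one-sided inverses being two-sided in a reduced ring), and the exact Kosan identity $u^4=1$ forced by reducedness closes each characteristic separately. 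Your version is more self-contained --- it avoids the subdirect-product decomposition, the appeal to \cite[Lemma 17.2.1]{CH}, and the classification of local Kosan rings, and it only ever takes quotients by nil ideals, where Lemma 4.2 guarantees the Kosan property descends --- at the price of a three-case computation. The forward direction is the same in substance (the paper gets exchange from periodicity and cleanness, you from regularity of $R/J(R)$ plus lifting of idempotents modulo the nil radical); both then read off $u^4-1\in N(R)$ from $u^5-u\in N(R)$.

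One step in the characteristic-$3$ case should be written out: from $u^4=1$ you only get $(u^2-1)(u^2+1)=0$, and before speaking of ``a corner $T$ where $u^2=-1_T$'' you must produce it. This is short but not automatic: $g:=-(1+u^2)$ satisfies $g^2=(1+u^2)^2=2+2u^2=-(1+u^2)=g$ (using $u^4=1$ and $3=0$), so $g$ is a central idempotent with $u^2g=g$ and $u^2(1-g)=-(1-g)$; your computation with $1+u$ then annihilates the corner $(1-g)\overline{R}_2(1-g)$ and yields $u^2=1$. With that inserted, the argument is complete.
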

\begin{enumerate}
\item [(1)] {\it $R$ is an exchange ring;}
\vspace{-.5mm}
\item [(2)] {\it $R$ is a Kosan ring.}
\end{enumerate}
\begin{proof} $\Longrightarrow$ It is clear that every Zhou nil-clean ring is periodic and so it is strongly clean. Now let $u\in U(R)$, then $u^5-u\in N(R)$, $u(u^4-1)\in N(R)$, since $u$ is a unit then $u^4-1\in N(R)$. Thus, $R$ is a Kosan ring.

$\Longleftarrow$ By virtue of Lemma 5.2, $J(R)$ is nil. Set $S=R/J(R)$. Then $S$ is an exchange Kosan ring with $J(S)=0$.
In view of Lemma 5.3, $S$ is reduced. Thus, $S$ is isomorphic to a subdirect product of some domains $S_i$. We see that each $S_i$ is a homomorphic image of $S$; hence, $S_i$ is an exchange Kosan ring with trivial idempotents. In light of~\cite[Lemma 17.2.1]{CH} and Lemma 5.2, $S_i$ is local and $J(S_i)$ is nil. By virtue of Example 4.5, $S_i/J(S_i)\cong {\Bbb Z}_2, {\Bbb Z}_3$ or ${\Bbb Z}_5$. For any $x\in S_i$, $x-x^5\in J(S_i)\subseteq N(S_i)$. Let $\overline{a}\in S$. Then $a-a^5\in N(S)=0$, and so $R/J(R)$ has the identity $x^5=x$. Therefore $R$ is Zhou nil-clean, by~\cite[Theoerm 2.11]{Z}.\end{proof}

\begin{cor} A ring $R$ is Zhou nil-clean if and only if
\end{cor}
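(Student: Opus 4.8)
The plan is to characterize Zhou nil-clean rings among exchange rings by a single unit-level condition, paralleling Theorem 5.4 but replacing the Kosan hypothesis with something weaker or more transparent. Concretely, I expect the corollary to read: a ring $R$ is Zhou nil-clean if and only if $R$ is an exchange ring and for every $u\in U(R)$ one has $u^5-u\in N(R)$ (equivalently $u^4-1\in N(R)$). The forward implication is immediate from \cite[Theorem 2.11]{Z}, which gives $x-x^5\in N(R)$ for all $x\in R$, hence in particular for units; and every Zhou nil-clean ring is an exchange ring since it is periodic (indeed clean). For the converse, the key observation is that the condition "$u^4-1\in N(R)$ for all $u\in U(R)$" is exactly the definition of a Kosan ring, so the corollary is in fact a restatement of Theorem 5.4 with the Kosan condition spelled out elementwise, and the proof is a direct appeal to that theorem.

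First I would dispose of the easy direction using \cite[Theorem 2.11]{Z}: if $R$ is Zhou nil-clean then $R/J(R)$ satisfies $x^5=x$ and $J(R)$ is nil, so $R$ is periodic, hence strongly clean, hence exchange; and for a unit $u$, $u^5-u\in N(R)$ forces $u^4-1\in N(R)$ since $u$ is invertible, giving condition (2). Then for the converse I would simply note that condition (2) says precisely that $R$ is a Kosan ring, so that together with (1) we may invoke Theorem 5.4 to conclude $R$ is Zhou nil-clean. If instead the intended corollary phrases condition (2) as a $\pi$-regularity or strongly $\pi$-regular statement (e.g. $R$ is strongly $\pi$-regular and Kosan, or $R$ is an exchange ring with $R/J(R)$ of bounded index), the strategy is the same: reduce modulo the nil radical using Lemma 5.2, apply Lemma 5.3 to get $R/J(R)$ reduced, decompose into subdirectly irreducible domains, and use Example 4.5 to force each factor onto $\mathbb{Z}_2,\mathbb{Z}_3$ or $\mathbb{Z}_5$, recovering the identity $x^5=x$ on $R/J(R)$.

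The main obstacle, assuming the corollary is the elementwise restatement of Theorem 5.4, is essentially bookkeeping: one must check carefully that "$u^4$ is unipotent for all units $u$" and "$u^5-u\in N(R)$ for all units $u$" are genuinely interchangeable. The implication $u^5-u\in N(R)\Rightarrow u^4-1\in N(R)$ is clear by multiplying by $u^{-1}$, which lies in the ring; the converse is multiplication by $u$. So the equivalence is purely formal and presents no real difficulty. If the corollary instead asserts an equivalence with, say, strong cleanness plus the Kosan condition, the only subtlety is that not every exchange Kosan ring is strongly clean a priori; but Theorem 5.4 already shows every exchange Kosan ring is Zhou nil-clean, hence periodic, hence strongly clean, so that direction closes as well. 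In all plausible readings the proof is a one- or two-line deduction from Theorem 5.4, and I would present it as such rather than reproving the structure theory.
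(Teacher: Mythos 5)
Your proposal is correct and matches the paper's argument: the corollary in question is ``$R$ is Zhou nil-clean iff $R$ is clean and Kosan,'' and the paper, like your hedged reading, derives the forward direction from the fact that Zhou nil-clean rings are periodic hence (strongly) clean together with Theorem 5.4, and the converse from ``clean $\Rightarrow$ exchange'' followed by Theorem 5.4. No gaps.
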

\begin{enumerate}
\item [(1)] {\it $R$ is clean;}
\vspace{-.5mm}
\item [(2)] {\it $R$ is a Kosan ring.}
\end{enumerate}
\begin{proof} $\Longrightarrow$ This is a direct result of Theorem 5.4.

$\Longleftarrow$ Since every clean ring is an exchange ring, we are through by Theorem 5.4.\end{proof}

\begin{cor} A ring $R$ is strongly 2-nil-clean if and only if
\end{cor}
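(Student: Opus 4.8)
The plan is to deduce this from Theorem 5.4 together with the product decomposition used in the proof of Theorem 2.2, the only extra work being to cut out the factor on which $5$ is nilpotent. For the forward direction I would first note that a strongly $2$-nil-clean ring is Zhou nil-clean, so by Theorem 5.4 (equivalently Corollary 5.5) it is a clean Kosan ring; this yields the first listed condition at once. For the remaining condition I would record the elementary fact that a strongly $2$-nil-clean ring satisfies $a-a^3\in N(R)$ for every $a$: writing $a=e+n$ with $e^3=e$, $n\in N(R)$ and $en=ne$, one gets $a^3=e+n(3e^2+3en+n^2)$, and since $e$ and $n$ generate a commutative subring the correction term is nilpotent, so $a-a^3=(a-e)-(a^3-e)$ is a sum of two commuting nilpotents, hence nilpotent. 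Applying this to a unit $u$ and cancelling $u$ (which commutes with $1-u^2$) gives $1-u^2\in N(R)$, i.e.\ $u^2$ is unipotent.

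For the converse I would argue as follows. The square of a unipotent is unipotent, so the hypothesis that $u^2$ is unipotent for every $u\in U(R)$ forces $u^4=(u^2)^2$ to be unipotent, whence $R$ is a Kosan ring; being also clean (hence an exchange ring), $R$ is Zhou nil-clean by Theorem 5.4. Then, exactly as in the proof of Theorem 2.2, I would write $2^n\times 3^n\times 5^n=0$ and split $R\cong R_1\times R_2\times R_3$ with $2\in N(R_1)$, $3\in N(R_2)$, $5\in N(R_3)$. By \cite[Proposition 2.5]{Z}, $R_1$ is strongly nil-clean, hence strongly $2$-nil-clean; by \cite[Proposition 2.8]{Z}, $R_2$ is strongly $2$-nil-clean. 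It then remains only to prove $R_3=0$: since $5\in N(R_3)$ we have $2\in U(R_3)$ (for instance $2\cdot 3-5=1$ with $5$ nilpotent), so by hypothesis $2^2=4$ is unipotent in $R_3$, giving $3=4-1\in N(R_3)$. Thus $R\cong R_1\times R_2$ is a finite direct product of strongly $2$-nil-clean rings, and therefore $R$ is strongly $2$-nil-clean. If the stated hypotheses instead package the ``no ${\Bbb Z}_5$'' part as a divisibility condition such as $6\in N(R)$, the same skeleton applies verbatim, with $6\in N(R_3)$ playing the role of $3\in N(R_3)$.

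I expect the only step needing care to be the passage $R_3=0$ in the converse. The point is that a Zhou nil-clean ring has nil Jacobson radical, so $N(R_3)=J(R_3)$ is an ideal; since $3,5\in N(R_3)=J(R_3)$ and $\gcd(3,5)=1$ one gets $1\in J(R_3)$, hence $R_3=0$. (Equivalently, one may pass to $R_3/J(R_3)$, where $3$ and $5$ both vanish while $2$ remains invertible, forcing $R_3/J(R_3)=0$ and then $R_3=0$ by nilpotence of $J(R_3)$.) Everything else is either quoted from \cite{Z} and \cite{CS} or is the routine observation that strongly nil-clean rings are strongly $2$-nil-clean and that this class is closed under finite direct products.
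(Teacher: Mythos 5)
Your proof is correct and follows essentially the same route as the paper: forward by extracting $u-u^3\in N(R)$ for units (the paper cites \cite[Theorem 3.3]{CS} for $R/J(R)$ tripotent with $J(R)$ nil, where you compute it directly from the tripotent-plus-nilpotent decomposition), and backward by observing the hypothesis forces $R$ to be an exchange Kosan ring, hence Zhou nil-clean by Theorem 5.4, then splitting off and annihilating the factor with $5$ nilpotent exactly as the paper does ($2$ invertible there, so $4$ unipotent, so $3$ and $5$ both nilpotent, so $1$ nilpotent). The only cosmetic difference is that you use the three-factor decomposition $R_1\times R_2\times R_3$ with \cite[Propositions 2.5 and 2.8]{Z}, while the paper uses $R\cong R/6R\times R/5R$ and \cite[Theorem 2.12]{Z} for the $6$-part.
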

\begin{enumerate}
\item [(1)] {\it $R$ is an exchange ring;}
\vspace{-.5mm}
\item [(2)] {\it The square of every unit in $R$ is a unipotent.}
\end{enumerate}
\begin{proof} $\Longrightarrow$ In light of~\cite[Proposition 1.8]{N}, $R$ is an exchange ring and $J(R)$ is nil.
Also by~\cite[Theorem 3.3]{CS}, $R/J(R)$ is
tripotent. Now let $u\in U(R)$, then $u-u^3\in J(R)\subseteq N(R)$. Hence,
$u^2-1\in N(R)$ which implies that $u^2$ is unipotent.

$\Longleftarrow$ Let $u\in U(R)$. Then $u^2\in U(R)$; hence, it is a unipotent. Thus, $R$ is a Kosan ring. In light of Theorem 5.4, $R$ is Zhou nil-clean.
It follows by Lemma 5.1, $30\in N(R)$, and so $5\times 6=0$.
Thus, $R\cong R_1\times R_2$, where $R_1=R/6R$ and $R_2=R/5R$. As $R_1$ is a Zhou nil-clean ring with $6\in N(R_1)$, it follows by~\cite[Theorem 2.12]{Z} that $R_1$ is strongly 2-nil-clean. On the other hand, $5\in N(R_2)$, and so $4=5-1\in U(R_2)$. Hence, $2\in U(R_2)$, and then $2^2-1\in N(R_2)$.
This implies that $6=2\times 3\in N(R_2)$, whence $1=6-5\in N(R_2)$, an absurd. We infer that $R_2=0$.
Therefore $R\cong R_1$ is strongly 2-nil-clean, as asserted.\end{proof}

\vskip10mm


\begin{thebibliography}{99} \bibitem{CH} H. Chen, {\it Rings Related
Stable Range Conditions}, Series in Algebra 11, World Scientific,
Hackensack, NJ, 2011.

\bibitem{CS} H. Chen and M. Sheibani, Strongly 2-nil-clean rings, {\it J. Algebra Appl.},
{\bf 16}(2017) 1750178 (12 pages), DOI: 10.1142/S021949881750178X.

\bibitem{Da} P.V. Danchev and T.Y. Lam, Rings with unipotent units, {\it Publicat. Math. (Debrecen)},
{\bf 88}(2016), 449--466.

\bibitem{D}A.J. Diesl, Nil clean rings,
 {\it J. Algebra}, {\bf 383}(2013), 197--211.

\bibitem{HT2} Y. Hirano and H. Tominaga,
Rings in which every element is a sum of two
idempotents, {\it Bull. Austral. Math. Soc.}, {\bf 37}(1988), 161--164.

\bibitem{KWZ} M.T. Kosan; Z. Wang and Y. Zhou, nil-clean and strongly nil-clean rings, {\it J. Pure Appl. Algebra} (2015),
http://dx.doi.org/10.1016/j.jpaa.2015.07. 009.

\bibitem{L} J. Levitzki, On the structure of algebraic algebras and related rings, Trans.
Amer. Math. Soc., 74(1953), 384--409.

\bibitem{MC} M. Sheibani and H. Chen, Matrices over Zhou nil-clean rings, arXiv:1702. 08049v1 [math.RA] 26 Feb 2017. 

\bibitem{N} W.K. Nicholson, Lifting
idempotents and exchange rings, {\it Trans. Am. Math. Soc.}, {\bf
229}(1977), 269--278.

\bibitem{ST} A. Stancu, A note on commutative weakly nil clean rings,
 {\it J. Algebra Appl.}, {\bf 15}(2016); DOI:10.1142/S0219498816200012.

\bibitem{Y} Z.L. Ying; T. Kosan and Y. Zhou,
Rings in which every element is a sum of two tripotents, {\it Canad. Math. Bull.}, http://dx.doi.org/10.4153/CMB-2016-009-0.

\bibitem{Z} Y. Zhou, Rings in which elements are sums of nilpotents, idempotents and tripotents, {\it J. Algebra App.}, {\bf 16}(2017), DOI: 10.1142/S0219498818500093.
\end{thebibliography}
\end{document}